\documentclass[12pt]{article}
\usepackage{stmaryrd}
\usepackage{amssymb}
\usepackage{amsmath}
\usepackage{amsfonts}
\usepackage{amsthm}
\usepackage[colorlinks=false,pdfpagemode=FullScreen]{hyperref}
\usepackage{graphics}
\usepackage[dvips]{graphicx}
\usepackage{ifthen}
\usepackage[latin1]{inputenc}
\usepackage{epsfig}
\usepackage{psfig}
\usepackage{psfrag}

\textwidth =15.0cm \textheight=21.0cm \oddsidemargin=0cm
\topmargin=-2cm
\parindent=0pt

\def\th{\theta}
\def\a{\alpha}
\def\r{\rho}
\def\t{\tau}
\def\b{\beta}
\def\d{\partial}
\def\de{\delta}
\def\g{\gamma}
\def\l{\lambda}

\def\o{\omega}

\def\C{\mathbb{C}}
\def\O{\Omega}

\def\D{{\cal D}}

\def\phi{\varphi}
\def\e{\varepsilon}

\def\R{\mathbb{R}}

\def\H{\mathbb{H}}
\def\Z{\mathbb{Z}}

\def\T{\mathbb{T}}

\def\M{\mathbb{M}}
\def\A{\mathcal{A}}

\def\det{\mbox{{\rm det}}\;}


\newtheorem{defi}{Definition}[section]
\newtheorem{theo}[defi]{Theorem}
\newtheorem{prop}[defi]{Proposition}
\newtheorem{lem}[defi]{Lemma}
\newtheorem{corol}[defi]{Corollary}
\newtheorem{remark}[defi]{Remark}
\newtheorem{ass}[defi]{Assumption}

\numberwithin{equation}{section}

\begin{document}

\renewcommand{\refname}{References}

\date{}
\title{High-frequency limit of the Maxwell-Landau-Lifshitz
system in the diffractive optics regime}
\author{LU Yong\footnote{Universit\'e Paris-Diderot (Paris 7), Institut de Math\'ematiques de Jussieu, UMR CNRS 7586; e-mail: luyong@math.jussieu.fr}}

\maketitle {\footnotesize {\bf Abstract} - We study semilinear
Maxwell-Landau-Lifshitz systems in one space dimension. For highly
oscillatory and prepared initial data, we construct WKB approximate
solutions over long times $O(1/\e)$. The leading terms of the WKB
solutions solve cubic Schr\"odinger equations. We show that the
nonlinear normal form method of Joly, M\'etivier and Rauch
\cite{JMR_TMB} applies to this context. This implies that the
Schr\"odinger approximation stays close to the exact solution of
Maxwell-Landau-Lifshitz over its existence time. In the context of
Maxwell-Landau-Lifshitz, this extends the analysis of Colin and
Lannes \cite{CD_DS} from times $O(|\ln \e|)$ up to $O(1/\e)$.

\medskip

\medskip

\medskip

\tableofcontents
\begin{section}{Introduction}

The state of a ferromagnet is described by the magnetization vector
$M$. The evolution of $M$ with time $t$ without damping is governed
by the Landau-Lifshitz equation
\begin{equation}\label{LL}
 \d_t M=-M\times H,
\end{equation}
where $H$ is the magnetic field. The electromagnetic field variables
obey Maxwell equations
\begin{equation}\label{M}
\left\{ \begin{aligned}
         \d_t E-\nabla \times H &= 0, \\
         \d_t H+\nabla \times E &= -\d_t M.\\
                          \end{aligned} \right.
\end{equation}
The Maxwell-Landau-Lifshitz system that we study is
\eqref{LL}-\eqref{M}. The physical constants have been set equal to
one. The spatial domain is $\R^3$. Observe that the
Maxwell-Landau-Lifshitz system admits a family of constant
solutions:
\begin{equation}
(E,H,M)_\a =(0,\a M_0, M_0),\nonumber
\end{equation}
for any $\a>0$ and any $M_0\in\R^3$. We are interested in the
solutions that can be written as small, slowly variable
perturbations of such constant solutions. The perturbations are
measured in terms of an arbitrarily small positive parameter $\e$.
We postulate the following form for the perturbations:
\begin{equation}
 E(t,x) = \e \tilde{E}(\e t,\e x),~
         H(t,x) = \a M_0+\e \tilde{H}(\e t,\e x),~
         M(t,x) = M_0+\e \tilde{M}(\e t,\e x).
                          \nonumber
                          \end{equation}

If $(E,H,M)$ is a solution of \eqref{LL}-\eqref{M}, then
$(\tilde{E}(t,x),\tilde{H}(t,x),\tilde{M}(t,x))$ satisfies the
following system
\begin{equation}\label{1st eq}
\left\{ \begin{aligned}
        & \d_t \tilde{E}-\nabla \times \tilde{H} = 0, \\
         &\d_t \tilde{H}+\nabla \times \tilde{E} = -\d_t \tilde{M},\\
         &\d_t \tilde{M}=-\e^{-1}M_0 \times \tilde{H}+\e^{-1} \a M_0 \times \tilde{M}-\tilde{M}\times
         \tilde{H}.\\
                          \end{aligned} \right.
                          \end{equation}

Introduce the perturbation unknown
$u=(\tilde{E},\tilde{H},\a^{\frac{1}{2}}\tilde{M}):[0,\infty)\times
\R^3 \rightarrow \R^9=\R^3\times\R^3\times\R^3$. Then the system
\eqref{1st eq} can be written as a symmetric quadratic hyperbolic
system in $u$:
\begin{equation}\label{sys 3d}
\d_t u+A(\d_x)u+\frac{1}{\e} L_0 u=B(u,u),
\end{equation}

where $A$ and $L_0$ are defined by
\begin{equation}\label{def A}
A(\d_x)=
\begin{pmatrix}
0 & -\d_x \times  & 0\\
\d_x \times & 0 & 0\\
0 & 0 & 0
\end{pmatrix},~~
L_0=
\begin{pmatrix}
0 & 0  & 0\\
0& -M_0 \times & \a^{\frac{1}{2}}M_0 \times\\
0&  \a^{\frac{1}{2}}M_0 \times & -\a M_0 \times
\end{pmatrix}.
\end{equation}
The bilinear form $B(\cdot,\cdot)$ is defined for any $
u=(u^1,u^2,u^3), v=(v^1,v^2,v^3) \in\R^3\times \R^3\times \R^3$ by
\begin{equation}\label{def B}
B(u,v)= \frac{1}{2}\begin{pmatrix}
0 \\
\a^{-\frac{1}{2}}(u^3\times v^2+v^3\times u^2)\\
-(u^3\times v^2+v^3\times u^2)
\end{pmatrix}.
\end{equation}
It is easy to check that $A(\d_x)=\sum_{j=1}^3A_j\d_j$ with $A_j$
symmetric and that $L_0$ is skew-symmetric.
We set $\a=1$ and $M_0=(1,0,0)$.\\

 In \cite{HL}, Leblond did formal computations for Maxwell-Landau-Lifshitz system,
and derived a Davey-Stewartson system in the high-frequency limit,
for diffractive times. Our \emph{goal} is to justify the
Davey-Stewartson approximation over long time interval of order
$O(1/\e)$. For technical reasons that are made clear in Section
1.1.3 and have to do with the issue of localizing the resonances, so
we restrict our problem to one spatial dimension, where the
Davey-Stewartson system degenerates into cubic Schr\"odinger
equation. In one spatial dimension, the equation \eqref{sys 3d}
becomes
\begin{equation}\label{sys org}
\d_t v+A(e_1)\d_y v+\frac{L_0 v}{\e}=B(v,v),
\end{equation}
where the space variable is $y\in \R^1$, and the vector $e_1\in\R^3$
is
\begin{equation}\label{def e1}
e_1=(1,0,0)^t.
\end{equation}

We consider highly oscillatory initial data of the form
\begin{equation}\label{ini org}
 v(0,y)=a(y) e^{i k y/\e}+\overline{a(y)} e^{-i k y/\e}+\e a_1(y,k y/\e)+\e^2 a_2(y,k
 y/\e),
 \end{equation}
where $a_1(y,\th)$ and $a_2(y,\th)$ are real-valued and
$2\pi$-periodic in $\th$.

The system \eqref{sys org} is semilinear symmetric hyperbolic. For
fixed $\e>0$, the local existence, uniqueness and regularity of
local-in-time solutions to (\ref{sys org})-(\ref{ini org}) in smooth
Sobolev spaces are classical.

\bigskip

 Our goal here is to describe the
solutions in the high-frequency limit $\e\rightarrow 0$ over long
times $O(1/\e)$. There are three main issues.

\smallskip

(1).~First, because of the fast oscillations, the Sobolev norm of
the initial datum is unbounded as $\e\rightarrow 0$: for any $s>0$,
$$|v(0)|_{H^{s}}=|a(y)e^{iky/\e}+... |_{H^{s}}=O(\e^{-s})\rightarrow\infty.$$
By the classical theory of Cauchy problem of symmetric hyperbolic
system, the existence time $t^*(\e)$ shrinks to $0$ as
$\e\rightarrow 0$.

\smallskip

We can go around this issue and show that the natural existence time
is $O(1)$ by introducing \emph{profiles} as in \cite{JMR_Ind} and
\cite{JMR_TMB}. This means we consider a representation of the
solutions by maps depending periodically on the fast variable
$\th\in \T=\R/2\pi \Z$:
\begin{equation}\label{profile}
v(t,y)=V(t,y,\th)|_{\th=\frac{(-\o t+k y)}{\e}},
\end{equation}
where $\o\in \R$ is chosen such that the couple $(\o,k)$ satisfies
dispersion relation $\det(-i \o+A(e_1)i k+ L_0)=0$. Then to solve
the Cauchy problem ($\ref{sys org}$)-($\ref{ini org}$), it is
sufficient to solve the following Cauchy problem in $V$:

\begin{equation}\label{sys prof}
\left\{ \begin{aligned}
                 &\d_t V+A(e_1)\d_yV+\frac{1}{\e}\{-\o\d_{\th}+A(e_1)k\d_{\th}+ L_0\}
V=B(V,V),\\
                 &V(0,y,\th)=a(y)e^{i\th}+\overline{a(y)}e^{-i\th}+\e a_1(y,\th)
                 +\e^2 a_2(y,\th).\\
                          \end{aligned} \right.
                          \end{equation}

This is a symmetric hyperbolic system, for which the classical
theory gives local-in-time well-posedness, uniformly in $\e$.
Indeed, the data are now uniformly bounded with respect to $\e$ in
Sobolev spaces with non-negative indices, the $L^2$ estimate is
uniform in $\e$ in spite of the large $1/\e$ prefactor because the
operator $-\o\d_{\th}+A(e_1)k\d_{\th}+ L_0$ is skew-adjoint, and the
commutator estimates are trivial because the operator
$-\o\d_{\th}+A(e_1)k\d_{\th}+ L_0$ has constant coefficients.

\medskip

(2).~Second, the initial datum \eqref{ini org} has a \emph{large
amplitude} $O(1)$. In this regime, the maximal existence time
\emph{a priori} is $O(1)$, not $O(1/\e)$. For \eqref{sys prof}, the
classical $H^s(\R \times \T)$ energy estimate for semilinear
symmetric hyperbolic operators is
\begin{equation}
V(t)|_{H^s}\leq |V(0)|_{H^s}+C \int_0^t |V(t')|_{H^s} dt',\nonumber
\end{equation}
where the constant $C$ depends on $|V|_{L^\infty}$, implying, by
Gronwall's lemma, the bound
$$|V(t)|_{H^s}\leq  |V(0)|_{H^s}e^{Ct}.$$

Over long times $O(1/\e)$, the upper bound diverges to infinity as
$\e\rightarrow0$. We will overcome this problem by normal form
reduction and a rescalling method, following  Joly, M\'etivier and
Rauch in \cite{JMR_TMB} and Texier in \cite{em3}. This is in
contrast to the situation in which the initial data have a small
amplitude $O(\e)$,  where the regime is said to be \emph{weakly
nonlinear} and the natural existence time is $O(1/\e)$.

\medskip

(3).~Third, we are confronted with the issue of deriving uniform
bounds for the corrector terms of the ansatz. If we denote $V=V^0+\e
V^1 +...$ the WKB expansion, we want indeed to guarantee that $\e
V^1$ is $o(1)$ in the time interval under consideration, otherwise
the significance of the WKB expansion as an asymptotic expansion
breaks down. However there typically arises an equation of the form
${\displaystyle(\d_t+\r\d_y)a=b}$, where $a$ is a component of $V^1$
and $b$ depends on the leading term $V^0$. For the trivial datum
$a(0,y)=0$, the solution is $\displaystyle{a(t,y)=\int_0^tb(t-s,y-\r
s)ds},$
 giving typically the estimate $
\|a(t)\|_{H^s} \leq t \|b\|_{L^\infty (H^s)} $  and nothing better.
Over long time $O(1/\e)$, the upper bound is of order $O(1/\e)$
implying that $\e V^1$ could well be $O(1)$. This phenomenon is
called secular growth and was extensively studied by Lannes in
\cite{LSG}.
\bigskip

For Maxwell-Landau-Lifshitz, we overcome the issues (2) and (3), and
show existence up to times of order $O(1/\e)$. We also show that the
solution can be well approximated by WKB solutions, the leading
terms of which satisfy cubic Schr\"odinger equations.
\medskip

In the rest of this introduction, we describe in greater detail the
issues associated with (2)-(3) above in upcoming Section 1.1, before
turning to the statement of our results (in Section 1.2).

\subsection{Context}

\subsubsection{Secular growth}

In \cite{CD_DS}, Colin and Lannes study systems of the form
\eqref{sys 3d} with highly oscillatory initial data
\begin{equation}\label{ini 3d}
 u(0,x) =a(x) e^{i \vec{k} \cdot x/\e}+\overline{a(x)} e^{-i \vec{k} \cdot
 x/\e}+\e a_1(x,\frac{\vec{k} \cdot x}{\e})+\e^2 a_2(x,\frac{ \vec{k} \cdot x}{\e})+...,
 \end{equation}
corresponding to \eqref{ini org} in several space dimensions.

They first show, under a weak transparency assumption (see
Assumption 2.1 in \cite{JMR_TMB}) ensuring that a WKB cascade exists
that an a three-scale approximate profile of the form
\begin{equation}
V^a(t,x, \th)=\sum_{j=0}^{2} \e^j V_j(\t,t,x, \th)|_{\t=\e t},
~~V_j(\t,t,x, \th)=\sum_{p}\e^{ip\th}V_{jp}(\t,t,x)\nonumber
\end{equation}
can be constructed by three-scale WKB expansion over long times of
order $O(1/\e)$. The leading terms of the approximate solution solve
Davey-Stewartson systems. Under the additional assumption that the
Davey-Stewartson systems are well-posed, this proves existence of
WKB solutions over time $O(1/\e)$. As mentioned in Section 1.1,
Colin and Lannes have to deal with the issue of secular growth.
There arises in their analysis an equation of the form
\begin{equation}\label{sec eq2}(\d_t+\o'(k)\cdot\d_x)\pi_1 V_{11}=2\pi_1 B(V_{01},(\pi_0
V_{10})^*),\end{equation}
 where $\pi_0$ is a spectral projector (see \eqref{pi0}) and $(\pi_0
V_{10})^*$ is from the following decomposition
$$
\pi_0V_{10}=(\pi_0 V_{10})^*+\langle \pi_0 V_{10}\rangle
$$
where the part $\langle \pi_0 V_{10}\rangle$ satisfies the transport
equations $(\d_t+\o'(k)\cdot\d_x)\langle \pi_0 V_{10}\rangle=0$ and
the residual $(\pi_0 V_{10})^*$ satisfies the following homogeneous
hyperbolic equation
\begin{equation}\label{v10 star} (\d_t+\pi_0A(\d_x)\pi_0)(\pi_0
V_{10})^*=0.
\end{equation}
 The couple ($V_{01}$, $\langle \pi_0 V_{10}\rangle$) is the solution of
a Davey-Stewartson system and is uniformly bounded in a Sobolev
space over time interval $[0,T]_\t\times \R_t$, where the vector
$\langle \pi_0 V_{10}\rangle$ is determined by the leading term
$V_{01}$.

Given any initial datum to $(\pi_0 V_{10})^*$, the equation
\eqref{v10 star} implies a global solution $(\pi_0 V_{10})^*$. Back
to \eqref{sec eq2}, one may only generally have an estimate $\pi_1
V_{11}=O(t)$, unbounded as $t \to \infty$. Assuming that the initial
datum $a\in \Sigma^s=\{u\in H^{s+n^+/2}, (1+|\cdot|)^s \hat{u}\in
L^\infty \}$ for some $s>3+n/2$, Colin and Lannes prove a better
secular growth estimate in $O(\ln t)$ for $\pi_1 V_{11}$ (this is
Proposition 2.3 in \cite{CD_DS}). Then, assuming that the initial
perturbation $(v-v^a)(0)$ is $O(\e^2)$, Colin and Lannes prove
stability of the WKB solution in times $O(|\ln \e|)$ (this is
Proposition 3.1 in \cite{CD_DS}).
\medskip

Here, we consider the problem in one space dimension, with initial
data in classical Sobolev spaces. We allow initial perturbations of
size $O(\e),$ and give two results (see Theorem \ref{theorem} below
for a precise statement), the proofs of which rely heavily on the
specific structure of the Maxwell-Landau-Lifshitz system.


\medskip
First, assuming only the standard polarization condition for the
leading amplitude $a$ in the initial datum \eqref{ini org}, that is
$a(y)\in \ker(-i \o+A(e_1)i k+ L_0)$, we give a stability result in
intermediate times. This is statement ii in Theorem 1.4 below. We
show that a WKB approximate solution can be constructed, of which
the leading term solves cubic Schr\"odinger equation. We choose
$(\pi_0 V_{10})^* (0)= 0$ to eliminate the secular growth of $\pi_1
V_{11}$. Indeed, with this null initial datum, the equation
\eqref{v10 star} implies that $(\pi_0 V_{10})^*$ is identically
zero. Then the source term in \eqref{sec eq2} is null, hence no
secular growth for $\pi_1 V_{11}$. The initial error is of order
$O(\e)$:
\begin{equation}\label{ini diff0}
|(v-v^a)(0)|_{L^\infty}=O(\e),
\end{equation}
where the functions $v$ and $v^a$ are respectively the exact
solution and the approximate solution. We then prove the error
estimate in time $O(\e^\a|\ln \e|/\e)$, for any $0 < \a <1$:
$$
|(v -v^a)(t)| = O(\e^\a |\ln \e|).
$$

\medskip

Second, for the above WKB solution, we give a stability result in
diffractive times $O(1/\e)$, corresponding to statement iii in
Theorem 1.4 below. We assume additionally that the correctors are
prepared as follows
\begin{equation}\label{ini diff}
|\Pi_s(v-v^a)(0)|_{L^\infty}=O(\e^2),
\end{equation}
where the eigen-projectors $\Pi_0$ and $\Pi_s$ are defined later in
\eqref{pi0s}. In the context of our WKB analysis in Section 2, the
condition \eqref{ini diff} is equivalent to the following condition
(see also \eqref{exp a1}):
\begin{equation}\label{ini-choix}
a_1(y,\th)=(1-\pi_0)V_{10}(0)+\langle \pi_0
V_{10}\rangle(0)+\tilde{a}(y,\th),
\end{equation}
where $a_1$ is the first order corrector of the initial datum
\eqref{ini org}, and $\tilde{a}$ is a vector function that satisfies
$\Pi_s \tilde{a}=0$. Under the preparation condition \eqref{ini
diff}, using the specific structure of Maxwell-Landau-Lifshitz (see
the upcoming Section 1.1.2), we then prove that, up to times
$O(1/\e)$, the approximate solution is stable in the following sense
$$
\sup_{t\in [0,T/\e],y\in \R}|\Pi_0(v-v^a)|=O(\e),~~\sup_{t\in
[0,T/\e],y\in \R}|\Pi_s(v-v^a)|=O(\e^2),
$$
meaning that the initial errors \eqref{ini diff0} and \eqref{ini
diff} are essentially preserved by the dynamics of the
Maxwell-Landau-Lifshitz system in the diffractive regime.

The condition \eqref{ini diff} is weaker than the corresponding
condition (24) in \cite{CD_DS}. Indeed, we only give constraints on
the component of the first corrector that belongs to the image of
$\Pi_s$. Unlike the condition (24) in \cite{CD_DS}, $V_{11}(0)$ does
not appear in \eqref{ini diff}, because we have the equality $\Pi_s
V_{11}=0$ (see \eqref{exp u11}).

\subsubsection{Maxwell-Bloch structure}

In \cite{C}, Colin (following Joly, M\'etivier and Rauch
\cite{JMR_TMB}) studies systems of the form
\begin{equation}\label{mb}
\left\{ \begin{aligned}
         (\d_t+B_1(\d_x)+\frac{E}{\e})u=f(u,m), \\
         (\d_t+B_2(\d_x)+\frac{F}{\e})m=g(u,u). \\
                          \end{aligned} \right.
\end{equation}

This operator has a block-diagonal structure which is very
particular. This is the one which is used in \cite{JMR_TMB}. Its
characteristic variety is the union of the characteristic varieties
of $\d_t+B_1(\d_x)+E$ and $\d_t+B_2(\d_x)+F.$ Assume the spectral
decomposition
$$
B_1(\xi)+E/i=\sum_j \l_j(\xi)\Pi^1_j(\xi),~B_2(\xi)+F/i=\sum_j
\mu_j(\xi)\Pi^2_j(\xi),
$$
where $\l_j$ and $\mu_j$ are eigenvalues, $\Pi^1_j$ and $\Pi^2_j$
are eigenprojectors.

 Colin makes a \emph{strong transparency hypothesis} as in
Joly, M\'etivier and Rauch \cite{JMR_TMB} (see Assumption 2.10 in
\cite{JMR_TMB}), saying that

\begin{ass}\label{st assu}
There exists a constant $C>0$ such that for all $\xi_1,\xi_2,\eta
\in \R^d$ which satisfy $\eta=\xi_1+\xi_2$, for any eigenvalues
$\mu_j(\eta),~\l_{j'}(\xi_1),~\l_{j''}(\xi_2)$ respectively of
$B_2(\eta)+F/i$, $B_1(\xi_1)+E/i$ and $B_1(\xi_2)+E/i$ and for all
$u,~v\in \C^n$, one has
\begin{equation}\label{sttrans}
\big|\Pi^2_j(\eta)g\big(\Pi_{j'}^1 (\xi_1)u,\Pi_{j''}^1
(\xi_2)v\big)\big|\leq
C|\l_{j'}(\xi_1)+\l_{j''}(\xi_2)-\mu_j(\eta)||u||v|.
\end{equation}

\end{ass}

We say a system has \emph{Maxwell-Bloch} \emph{structure}, if it is
of the form (\ref{mb}) and satisfies Assumption \ref{st assu}.

\medskip

For systems with Maxwell-Bloch structure, Colin shows that a WKB
expansion can be performed on (\ref{mb}), where the leading terms of
the three-scale WKB solution satisfy an elliptic Davey-Stewartson
system which is locally well-posed, implying that an approximate
solution $(u^a,m^a)$ can be constructed over times $O(1/\e)$ (see
Proposition 4.1 in \cite{C}). Assuming additionally that the initial
correctors are identically zero, that is, $(u,m)(0)=(u^a,m^a)(0)$
(which is strictly stronger than \eqref{ini diff}), Colin showed a
convergence result over times $O(1/\e)$. The Maxwell-Bloch structure
gives a control on the interactions of resonant waves (see Section
4.2 in \cite{C} and Section 3.2 in this paper). In this respect, it
is crucial in the derivation of error estimates in time $O(1/\e)$.

\bigskip

Here, we check that the Maxwell-Landau-Lifshitz system in one space
dimension \eqref{sys org} is of Maxwell-Bloch type.

\subsubsection{Resonances}
In order to check Assumption 1.1 on \eqref{sys 3d}, we need to
describe the resonances. For any $\xi=(\xi_1,\xi_2,\xi_3)$, we have
the spectral decomposition

$$
\sum_{m=1}^3 A_m \xi_m +L_0 /i=\sum_{j=1}^9 \l_j(\xi)\Pi_j(\xi).
$$
The resonance set is defined as the domain in frequency space in
which the upper bound in \eqref{sttrans} vanishes, that is the set
of frequencies $(\xi,\eta)$ such that $\Phi_{j_1,j_2,j_3}(\xi,\eta)
= 0,$ where the phases functions are defined by
$$
\Phi_{j_1,j_2,j_3}(\xi,\eta)=\l_{j_1}(\xi+\eta)-\l_{j_2}(\xi)-\l_{j_3}(\eta).
$$

For \eqref{sys 3d}, the eigen-polynomial is
$$
\det \big(-\l+\sum_{j=1}^3 A_j \xi_j +L_0
/i\big)=\l^3\big[\l^6-2(2+|\xi|^2)\l^4+(|\xi|^2(6+|\xi|^2)-2\xi_1^2)\l^2-|\xi|^2(2|\xi|^2-\xi_1^2)\big].
$$
It is of degree nine with three parameters $\xi_1$, $\xi_2$ and
$\xi_3$. The eigenvalues $\l_j$ do not symmetrically depend on
$\xi_1$, $\xi_2$ and $\xi_3$. This property makes the resonance
equation $\Phi_{j_1,j_2,j_3}(\xi,\eta) = 0$ too complicated to solve
analytically. For this technical reason, we consider the one space
dimensional system (\ref{sys org}), in which the dispersion relation
reduces to a simpler formula (see below \eqref{dis rel}).

\subsection{Description of the results}

If a $C^1$ profile $V(t,y,\th)$ solves the Cauchy problem
\begin{equation}\label{1d sys pro}
\left\{ \begin{aligned}
                 &\d_{t}V+\frac{1}{\e}A(e_1)\d_yV+\frac{1}{\e}\{-\o\d_{\th}+A(e_1)k \d_{\th}+ L_0\}
V=B(V,V),\\
                 &V(0,y,\th)=a(y)e^{i\th}+\overline{a(y)}e^{-i\th}+\e a_1(y,\th)+\e^2 a_2(y,\th)\\
                          \end{aligned} \right.
                          \end{equation}
on the time interval $[0,T/\e]$, then the function $v(t,y)$ defined
as \eqref{profile} solves the Cauchy problem (\ref{sys
org})-(\ref{ini org}) on the time interval $[0,T/\e]$.

\medskip

Throughout the paper, we always make the following assumptions:

We assume that the phase ($\o,k$) satisfies $\o \neq 0$ and the
dispersion relation $\det L(i(\o,k))=0$ with $L(i(\o,k)):=(-i
\o+A(e_1)i k+ L_0)=0$. This means that we choose a time frequency
that is compatible with the initial space frequency. We assume that
the leading term of the initial datum satisfies the polarization
condition: $a(y)\in \ker L(i(\o,k))$.  We also assume the regularity
conditions $a(y) \in H^s$, $a_1(y, \th)\in
H^1(\T_\th,H^{s-1}(\R^1_y))$ and $a_2(y,\th)\in
H^1(\T_{\th},H^{s-2}(\R^1_y)) $, where $s>2+1/2$.

\medskip

In Section 2, we construct an approximate profile for (\ref{1d sys
pro}) by WKB expansion. This construction relies on the structural
condition \eqref{weak str}, which implies the weak transparency
condition of Joly, M\'etivier and Rauch.

\begin{prop}\label{exis app1d}{(Existence of Approximate Profile)}\\
Under the above assumptions, there exists ${\displaystyle V^a\in
L^\infty ([0,T/\e]_t, H^1(\T_{\th},H^{s-2}(\R^1_y)))}$ for some
positive $T>0$ independent of $\e$, such that $V^a$ solves the
following Cauchy problem on time interval $[0,T/\e]_t$:
\begin{equation}\label{1d sys app}
\left\{ \begin{aligned}
                 &\d_{t}V^a+
A(e_1)\d_yV^a+\frac{1}{\e}\{-\o\d_{\th}+A(e_1)k\d_{\th}+ L_0\}
V^a=B(V^a,V^a)+\e^2 R\\
                 &V^a(0,y,\th)=V(0,y,\th)+\e b(y,\th)+\e^2 b_1(y,\th)\\
                          \end{aligned} \right.
                          \end{equation}
for some $R(t,y,\th)\in L^\infty ([0,T/\e]_t,
H^1(\T_{\th},H^{s-2}(\R^1_y))),~(b(y,\th), b_1(y,\th))\in
H^1(\T_{\th},H^{s-2}(\R^1_y))$.

Precisely, the function $ b$ is given in {\rm(\ref{exp b})} and has
the property stated in {\rm Lemma \ref{pt b}}; the function $R$ is
given by {\rm (\ref{exp r})} and has the property stated in {\rm
Lemma \ref{pt r}}. Moreover, the leading term of the WKB approximate
profile $V^a$ solves a cubic Schr\"odinger equation.

\end{prop}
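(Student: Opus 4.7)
The plan is to carry out a three-scale WKB expansion in a slow time $\t = \e t$, postulating
\begin{equation*}
V^a(t,y,\th) = V_0(\t,t,y,\th) + \e V_1(\t,t,y,\th) + \e^2 V_2(\t,t,y,\th)\Big|_{\t=\e t},
\end{equation*}
with every $V_j$ that is $2\pi$-periodic in $\th$ and Fourier-decomposed as $V_j = \sum_{p\in\Z} V_{jp}(\t,t,y) e^{ip\th}$. Substituting into \eqref{1d sys pro}, using the chain rule $\d_t \mapsto \d_t + \e \d_\t$, and collecting powers of $\e$, I obtain a cascade in which each harmonic $p$ is governed by the fast symbol $L(ip(\o,k)) := -ip\o + ipk A(e_1) + L_0$.

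Solving the cascade: the order $\e^{-1}$ equation forces the polarization $V_{0p} \in \ker L(ip(\o,k))$ for each $p$; after Fourier-truncating $V_0$ to the harmonics $p\in\{-1,0,1\}$ selected by the data, I split the order $\e^0$ equation along $\ker L(ip(\o,k))$ and its complement via the eigenprojector $\pi_p$. The complementary part prescribes $(I-\pi_p) V_{1p}$ algebraically in terms of $V_0$, while the projected part is a transport equation $(\d_t + v_p \d_y)\pi_p V_{0p} = \pi_p\{B(V_0,V_0)\}_p$ at the group velocity $v_p$ associated with the branch carrying $(\o,k)$. At order $\e^1$, the same splitting yields a solvability condition on $\pi_p V_{0p}$ in the slow time $\t$; combining it with the second-order Taylor expansion of the dispersion relation around $(\o,k)$ (which contributes $\tfrac{1}{2}\l''_1 \d_y^2$ through $V_{11}$) delivers a cubic Schr\"odinger equation for $\pi_1 V_{01}$ in the variable $\t$, while $\pi_0 V_{00}$ satisfies a coupled transport equation.

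The main obstacle is verifying that the cascade closes. The $\e^0$ and $\e^1$ forcings $B(V_0,V_0)$ and $2B(V_0,V_1)$ are not in the range of $L(ip(\o,k))$ in general; closure requires that their $\pi_p$-projections on the resonant modes vanish identically, i.e.\ the weak transparency condition referenced in the statement. I would check it directly from the explicit forms \eqref{def A} and \eqref{def B} with $\a=1$ and $M_0=e_1$: the bilinear $B$ has no component on the $\tilde E$ block, which is precisely the algebraic fact that fits \eqref{sys org} into the Maxwell-Bloch template of Section 1.1.2, and the one-dimensional dispersion polynomial (obtained from the three-dimensional one of Section 1.1.3 by setting $\xi_2=\xi_3=0$) is simple enough to enumerate the eigenvalues $\l_j(\xi)$ in closed form and to verify vanishing on each resonant triple.

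Once transparency is established, the leading cubic Schr\"odinger equation on $\R_y$ is locally well-posed in $H^s$ for the regularity $s>5/2$ fixed in the assumptions, so $\pi_1 V_{01}$ and $\pi_0 V_{00}$ exist on $\t\in[0,T]$ for some $T>0$; the algebraic relations and auxiliary transport equations then build $V_1$ and $V_2$ on the same interval, and rescaling $\t=\e t$ produces $V^a$ on $[0,T/\e]_t$ in $L^\infty(H^1(\T_\th, H^{s-2}(\R_y)))$. The residual $R$ collects the $O(\e^2)$ terms left after substitution into \eqref{1d sys pro}; it is controlled in the required norm by the Sobolev norms of $V_0, V_1, V_2$, the loss of two derivatives stemming from the $\d_y^2 V_2$ contribution, and the mismatch between $V^a(0)$ and $V(0)$ furnishes the correctors $b$ and $b_1$.
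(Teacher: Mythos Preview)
Your overall architecture (three-scale ansatz, cascade in $\e$, polarization/transport/Schr\"odinger hierarchy, residual collection) matches the paper's Section~2 exactly. However, two points in the middle of your argument are genuinely off and would not go through as written.

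First, the weak transparency verification. You write that ``$B$ has no component on the $\tilde E$ block, which is precisely the algebraic fact that fits \eqref{sys org} into the Maxwell--Bloch template.'' That is a different structural fact, used in Section~3 for the strong transparency/normal-form argument (Proposition~\ref{trans}); it does \emph{not} give you $\pi_0 B(\pi_1\cdot,\pi_{-1}\cdot)=0$, which is what you need here to close the cascade at $p=0$. In the paper the weak transparency \eqref{weak str} is obtained by an explicit computation with the generator $W_0$ of $\ker L_1$: one finds $B(W_0,\overline{W_0})$ proportional to $\O_0\times\overline\O_0+\overline\O_0\times\O_0=0$, and similarly $B(W_0,W_0)=0$. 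This is a property of the specific eigenvector, not of the block structure of $B$.

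Second, the treatment of the mean mode. You keep $\pi_0 V_{00}$ as a dynamical variable (``$\pi_0 V_{00}$ satisfies a coupled transport equation''), but since the datum has no zero harmonic at leading order the paper sets $V_{00}=0$ outright; this choice is essential for the Schr\"odinger approximation. The nontrivial mean mode is the \emph{corrector} $V_{10}$: it is governed by \eqref{ds 2eo}, and the paper solves it via the averaging decomposition $V_{10}=\langle V_{10}\rangle+V_{10}^*$, choosing $V_{10}^*=0$ so that the source in the transport equation \eqref{f trans} for $\pi_1 V_{11}$ vanishes and no secular growth occurs. Moreover, it is precisely this $\langle V_{10}\rangle$ (explicitly \eqref{sl v10}, proportional to $|g|^2$) that, through $2\pi_1 B(V_{01},V_{10})$ in \eqref{ds 1e}, produces the cubic term in the Schr\"odinger equation \eqref{schr}. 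Your sketch omits this mechanism entirely, so as stated you would obtain neither the correct nonlinearity nor the uniform-in-$t$ bounds on $V_1$ needed for $V^a\in L^\infty([0,T/\e],H^1(\T_\th,H^{s-2}))$.
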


We remark that it does not seem possible to construct a more precise
WKB solution. Indeed, for higher order terms, there arises an
equation of the form
\begin{equation}\label{ds v30}
 (-\r +\pi_0A(e_1)\pi_0)\d_y \langle V_{20}\rangle=4{\rm{Re}}\pi_0 B(V_{0,-1},V_{21})+2{\rm{Re}}\pi_0B(V_{1,-1},V_{11})-\d_\t \langle
 V_{10}\rangle,
\end{equation}
where the right hand side is not of the form $\d_y \mathcal{R}$,
then equation \eqref{ds v30} is not well posed in classical Sobolev
spaces. This can be contrasted with corresponding equations for
lower order terms (see \eqref{ds ch2e3}), where the source terms are
indeed of the form $\d_y \mathcal{R}$.

\medskip

For $V^a$ given by Proposition \ref{exis app1d}, defining
${\displaystyle v^a:=V^a\big(t,x,\frac{ky-\o t}{\e}\big)}$, then
$v^a \in L^\infty ([0,T/\e]_t \times\R^1_y)$ and satisfies the
following Cauchy problem on time interval $[0,T/\e]$:

\begin{equation}\label{equ app}
\left\{ \begin{aligned}
                 &(\d_t
+A(e_1)\d_y)v^a+\frac{1}{\e}L_0
v^a=B(v^a,v^a)+\e^2 R,\\
                 &v^a(0,y)=v(0,y)+\e b(y,\frac{ky}{\e})+\e^2 b_1(y,\frac{ky}{\e}).\\
                          \end{aligned} \right.
                          \end{equation}

We then justify the WKB expansion by giving an upper bound for
$|v-v^a|$ that is valid over the existence time for $v^a$:

\begin{theo}\label{theorem}
 Under the assumptions stated just above Proposition {\rm \ref{exis app1d}},
 there exists $T^*>0$ and $0<\e_0<1$, such that for all $T<T^*$, for
some constant $C=C(s,T)>0$ depending on $s$ through
$\|a\|_{H^s}+\|a_1\|_{H^2(\T_\th,H^{s-1})}+\|a_2\|_{H^2(\T_\th,H^{s-2})}$,
we have that for all $0<\e<\e_0$:

{\rm \textbf{i.}} Over the time interval $[0,T/\e]$, the Cauchy
problem {\rm\eqref{sys org}-\eqref{ini org}} admits a unique
solution $v$ of the form ${\displaystyle v(t,x)=V(t,x,\frac{ky-\o
t}{\e})}$, with $V(t,x,\th)\in L^\infty ([0,T/\e]_{t},
H^1(\T_{\th},H^{s-2}(\R^1_y)))$.

 {\rm \textbf{ii.}}We have the following error estimates
\begin{equation}\label{est1}
\|\Pi_0(v-v^a)\|_{L^\infty([0,\frac{T}{\e}]\times \R^1)}\leq C
(\e+T), ~~\|\Pi_s(v-v^a)\|_{L^\infty([0,\frac{T}{\e}]\times
\R^1)}\leq C \e,
\end{equation}

{\rm \textbf{iii.}} Assuming additionally that $a_1$ satisfies
\eqref{ini-choix}, we have the better error estimates
 \begin{equation}\label{est2}
\|\Pi_0(v-v^a)\|_{L^\infty([0,\frac{T}{\e}]\times \R^1)}\leq C \e,
~~\|\Pi_s(v-v^a)\|_{L^\infty([0,\frac{T}{\e}]\times \R^1)}\leq C
\e^2.
\end{equation}

\end{theo}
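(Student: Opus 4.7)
The plan is to derive an equation for the error $U := V - V^a$ at the level of profiles and control it on $[0, T/\e]$ by a normal-form reduction following Joly-M\'etivier-Rauch \cite{JMR_TMB}, using the Maxwell-Bloch structure verified in Section 1.1.2.

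First I would write the profile error equation coming from \eqref{1d sys pro} and \eqref{1d sys app}:
$$\d_t U + A(e_1)\d_y U + \tfrac{1}{\e}\mathcal{L} U = 2B(V^a, U) + B(U, U) - \e^2 R, \qquad U(0) = -\e b - \e^2 b_1,$$
with $\mathcal{L} := -\o\d_\th + A(e_1)k\d_\th + L_0$. Since the data and residual are $O(\e)$, I would rescale $W := U/\e$: the new unknown has $O(1)$ data and $O(\e)$ nonlinearity, but an $O(1)$ linear coupling $2B(V^a,W)$, which a naive Gronwall argument would integrate to growth $\exp(CT/\e)$, unacceptable on $[0,T/\e]$. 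This is the singular quadratic coupling that a normal form must remove.

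Following \cite{JMR_TMB} and \cite{C}, I would introduce a bilinear corrector $\mathcal{N}$ and pass to $\widetilde W := W + \e\,\mathcal{N}(V^a, W)$, with $\mathcal{N}$ chosen so that $[\tfrac{1}{\e}\mathcal{L}, \mathcal{N}(V^a,\cdot)] = -2B(V^a,\cdot)$ modulo resonant interactions. The non-resonant part of $\mathcal{N}$ is built by dividing the symbol of $B$ by the phases $\Phi_{j_1,j_2,j_3}$ from Section 1.1.3; the resonant contributions are tamed by the strong transparency Assumption \ref{st assu}, which holds for \eqref{sys org} by its Maxwell-Bloch structure (this is precisely what motivates the restriction to one space dimension noted at the end of Section 1.1.3). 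After the change of variable the transformed unknown satisfies
$$\d_t \widetilde W + A(e_1)\d_y \widetilde W + \tfrac{1}{\e}\mathcal{L}\widetilde W = \e\,\Psi(V^a, V^a, \widetilde W) + \e\,G(\widetilde W, \widetilde W) + O(\e),$$
with every source of size $O(\e)$ times polynomials in $\widetilde W$. Since $\mathcal{L}/\e$ is skew-adjoint in $L^2(\R_y\times\T_\th)$ and has constant coefficients in $\th$, a standard $H^s$ energy estimate gives $\tfrac{d}{dt}|\widetilde W|_{H^s} \leq C\e(1+|\widetilde W|_{H^s}+|\widetilde W|_{H^s}^2)$, and a bootstrap yields a uniform bound on $[0,T/\e]$ for $T$ small enough. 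Undoing the normal form and the rescaling gives $|U|_{H^s}=O(\e)$ and, by Sobolev embedding with $s>5/2$, the $L^\infty$ control; a continuation argument then extends the local solution of \eqref{1d sys pro} to the full interval and proves statement i.

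The finer statements come from restricting the same energy scheme to each spectral block. On $\mathrm{Im}\,\Pi_s$ the operator $\mathcal{L}$ is bounded below on the relevant nonzero $\th$-Fourier modes, the normal form absorbs every source term, and $\Pi_s U$ inherits the size of its initial data: $O(\e)$ for statement ii, and $O(\e^2)$ under \eqref{ini-choix} for statement iii, after an additional rescaling $\Pi_s W_\ast := \Pi_s W/\e$. On $\mathrm{Im}\,\Pi_0$ the operator $\mathcal{L}$ is degenerate and $\Pi_0\widetilde W$ satisfies a transport equation $(\d_t+\pi_0 A(e_1)\pi_0\d_y)\Pi_0\widetilde W = S$; without preparation, $S$ contains the resonant contribution $2\pi_1 B(V_{01},(\pi_0 V_{10})^*)$ from \eqref{sec eq2}, which is $O(1)$ and accumulates linearly in $t$, giving after the $\e$-rescaling the $C(\e+T)$ bound of \eqref{est1}. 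Under \eqref{ini-choix}, equation \eqref{v10 star} forces $(\pi_0 V_{10})^*\equiv 0$, the dangerous source vanishes, the surviving sources are $O(\e)$, and the transport estimate delivers $|\Pi_0(v-v^a)|_{L^\infty}=O(\e)$, completing \eqref{est2}. The main obstacle is the construction and boundedness of $\mathcal{N}$: one must check that the small divisors coming from the nine-branch dispersion relation of $L_0+kA(e_1)/i$ are uniformly controlled by the transparency bound \eqref{sttrans}, so that $\mathcal{N}$ acts boundedly on $H^s(\R_y\times\T_\th)$ and the $\e$ prefactor dominates both the small denominator inside $\mathcal{N}$ and the singular operator $\mathcal{L}/\e$.
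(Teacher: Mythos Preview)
Your overall strategy---normal form reduction \`a la \cite{JMR_TMB} exploiting the Maxwell-Bloch structure---is the right one, and matches the paper's philosophy. But the paper organizes the argument differently, and your account of where the bounds \eqref{est1}--\eqref{est2} actually come from contains real errors.

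First, the paper does \emph{not} apply the normal form to the error $U=V-V^a$. It applies the change of variable to $V$ and $V^a$ \emph{separately}: one projects $V=\Pi_0V+\Pi_sV$, rescales $\Pi_sV=\e W_s$ (using $\Pi_s a=0$), sets $N=W_s-J(V_0,V_0)$ with $J$ built from the transparency bound of Proposition~\ref{trans}, and rescales time $\t=\e t$. This already gives existence (part i) directly for $V$, independent of any comparison with $V^a$; your continuation argument via the error is unnecessary. One then performs the identical transformation on $V^a$ and subtracts, obtaining the pair $(\Phi,\Psi)=(\mathcal V_0-\mathcal V_0^a,\mathcal N-\mathcal N^a)$ satisfying a symmetric hyperbolic system on $[0,T]_\t$ with $O(1)$ data and sources.

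Second, and more seriously, your explanation of the $C(\e+T)$ in \eqref{est1} and of the improvement under \eqref{ini-choix} misidentifies the mechanism. The term $(\pi_0 V_{10})^*$ from \eqref{sec eq2} plays no role here: in the construction of $V^a$ (Section~2.1) one \emph{always} takes $(\pi_0 V_{10})^*\equiv0$, prepared data or not, precisely to kill secular growth in $\pi_1V_{11}$. The $T$ in \eqref{est1} comes instead from the \emph{initial size of} $\Psi$: without preparation, $\Psi(0)=-\Pi_s b+O(\e)$ is only $O(1)$ (Lemma~\ref{pt b}), so $\Psi$ stays $O(1)$, feeds an $O(1)$ source into the $\Phi$ equation, and integration over $[0,T]_\t$ gives the $T$. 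The preparation \eqref{ini-choix} does not touch $(\pi_0V_{10})^*$; what it does (Lemma~\ref{pt b}) is force $\Pi_s b=0$, so that $\Psi(0)=O(\e)$. Equally essential for part iii---and missing from your sketch---is Lemma~\ref{pt r}: the special choice of $f$ in \eqref{f h} makes $\Pi_s R=O(\e)$, so the $\Psi$ equation has $O(\e)$ source as well. Only with \emph{both} $\Pi_s b=0$ and $\Pi_s R=O(\e)$ can one rescale $(\Phi_1,\Psi_1)=(\Phi/\e,\Psi/\e)$ and close the estimate \eqref{est2}.

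Finally, watch the projectors: $\Pi_0$ projects onto the \emph{nonzero} eigenvalues $\l_1,\dots,\l_6$ and carries the full dispersive operator $\mathcal A_0$, while $\Pi_s$ projects onto the triple zero eigenvalue and the equation there reduces to $\d_t-\e^{-1}\o\d_\th$. Your description (``on $\mathrm{Im}\,\Pi_0$ the operator $\mathcal L$ is degenerate'') has these reversed, and conflates the capital $\Pi_0$ with the lowercase $\pi_0$ (projector onto $\ker L_0$) used in the WKB hierarchy.
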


\medskip

We remark that for times just short of the diffractive time scale,
that is $T = \e^\a |\ln \e|,0<\a<1$, the error estimate \eqref{est1}
gives an upper bound that is $o(1)$ in the limit $\e \to 0:$
\begin{equation}\label{est11}
\|(v-v^a)\|_{L^\infty([0,\e^{-1+\a} |\ln \e|]\times \R^1)}=O(
\e^{\a}|\ln \e|).
\end{equation}

\medskip

The proof of Theorem \eqref{theorem} is decomposed into two steps.
First in Section 3, we show that the system \eqref{1d sys pro} has
the Maxwell-Bloch structure, then by a nonlinear change of variable
introduced by Joly, M\'etivier and Rauch in \cite{JMR_TMB}, we prove
that the Cauchy problem \eqref{1d sys pro} admits a unique solution
over times of order $O(1/\e)$. Then in Section 4, we prove the error
estimates \eqref{est1} and \eqref{est2}.

\bigskip

We conclude this introduction by setting up some notations:

 For all $(\l,\xi)\in\R^{1+1}$, we introduce the matrices
$L(i(\l,\xi))=i\l+A(e_1)i\xi+L_0$ and the characteristic variety
${\rm Char} L(i(\l,\xi))=\{(\l,\xi)|\det L(i(\l,\xi))=0\}.$

\medskip

 For $\b=(\o,k)$, denote $L(\b\d_\th)=-\o \d_\th +A(e_1)k\d_\th
+L_0$ and $L_p=L(ip\b)=-ip\o+ipkA(e_1)+L_0$. Denote by $\pi_p$ the
orthogonal projector onto $\ker L_p$ and $L_p^{-1}$ the inverse of
$L_p$, or, if $L_p$ is singular, its partial inverse which is
defined as
$$\pi_pL_p^{-1}=L_p^{-1}\pi_p=0,
~L_pL_p^{-1}=L_p^{-1}L_p={\rm Id}-\pi_p.$$

\medskip

 For all $\xi\in\R^1$, we have the spectral
decomposition
\begin{equation}\label{sp decom}
A(\xi e_1)+L_0/i=\sum_{j=1}^9 \l_j(\xi)\Pi_j(\xi).
\end{equation}

We denote the total eigenprojectors
\begin{equation}\label{sp decom2}
\Pi_0(\xi)=\sum_{j=1}^6 \Pi_j(\xi), ~\Pi_s(\xi)=\sum_{j'=7}^9
\Pi_{j'}(\xi).
\end{equation}
By direct calculation, for all $\xi\in \R$, we have
$\l_7(\xi)=\l_8(\xi)=\l_9(\xi)=0$ and
\begin{equation}\label{pi0s}
\Pi_0(\xi)={\rm diag}\{0,1,1,0,1,1,0,1,1\},~~\Pi_s(\xi)={\rm
diag}\{1,0,0,1,0,0,1,0,0\}.
\end{equation}
The projector $\Pi_0(\xi)$ and $\Pi_s(\xi)$ are actually constant
matrices, independent of the variable $\xi$.

\medskip

In the following, most functions and operators depend on $\e$. In
this context, we typically use \emph{bounded} to mean \emph{bounded
uniformly in} $\e\in (0,1)$. Later on, we restrict the range of $\e$
to $0<\e<\e_0$, for some $\e_0$ small enough.

\end{section}

\begin{section}{WKB expansion and approximate profile}

The aim of this section is to construct an approximate profile of
(\ref{1d sys pro}) through WKB expansion. In the nonlinear regime of
our interest, the limit equations are cubic Schr\"odinger equations.
\begin{subsection}{The WKB Expansion}
We look for an approximate profile $V^a$ of the form
\begin{equation}\label{wkb sol 1d}
V^a(t,y, \th)=\sum_{j=0}^{2} \e^j V_j(\e t,t,y, \th), ~~V_j(\t,t,y,
\th)=\sum_{p\in \Z}\e^{ip\th}V_{jp}(\t,t,y).
\end{equation}

We plug (\ref{wkb sol 1d}) into (\ref{1d sys pro}) and cancel all
the terms of orders $O(\e^j)$, $j=-2,-1,0$.

\medskip

\textbf{Equations for the terms in  $O(\e^{-2})$}:
~$L(\b\d_{\th})V_0= (-\o \d_\th +A(e_1)k\d_\th +L_0)V_0=0 $. By
\eqref{wkb sol 1d}, it amounts to solving the following equations
for all $p\in \Z$:
\begin{equation}\label{eq u0p}
L_p V_{0p}=(- ip \o +A(e_1) ip k +L_0)V_{0p} =0.
\end{equation}
We study (\ref{eq u0p}) for each $p\in \Z$.

\medskip

\fbox{\textbf{p=0}} If $p=0$, the equation (\ref{eq u0p}) becomes
$L_0V_{00}=0$. Since there is no mean mode in the leading term of
the initial datum (\ref{1d sys pro}), we choose naturally
$V_{00}=0$. Actually, assuming the leading mean mode $V_{00}$ to be
null is crucial to obtain a Davey-Stewartson approximation, as in
\cite{C} and \cite{CD_DS} (here the Davey-Stewartson approximation
degenerates into a nonlinear Schr\"odinger approximation).

\medskip

\fbox{\textbf{p=1}} If $p=1$, the equation (\ref{eq u0p}) becomes
$L_1V_{01}=0$. By direct calculation, the dispersion relation $\det
L_1=0$ is
\begin{equation}\label{dis rel1}
\o^2(1-\frac{k^2}{\o^2})^2=(2-\frac{k^2}{\o^2})^2.
\end{equation}
We choose $\o\neq0$ and $k\neq 0$ that satisfy the dispersion
relation \eqref{dis rel1}. We denote
\begin{equation}\label{de l}
\de=-\frac{\o(1-\frac{k^2}{\o^2})}{2-\frac{k^2}{\o^2}},
~~~\g=1-\frac{k^2}{\o^2},
\end{equation}
then the dispersion relation (\ref{dis rel1}) is equivalent to
\begin{equation}\label{dis rel2}
k^2=\frac{\o+2\de}{\o+\de}\cdot\o^2,~~\de=\pm1.\nonumber
\end{equation}
Through direct calculation, we find that $\ker L_1$ is a
one-dimensional vector space with generator $W_0$:
\begin{equation}\label{kerL1}
W_0=\begin{pmatrix}
-\frac{i\de k}{\o}\O_0 \\
\O_0\\
-\g\O_0
\end{pmatrix},~~
\O_0=\begin{pmatrix}
0 \\
i\de\\
1
\end{pmatrix}.
\end{equation}
Then the orthogonal projector $\pi_1$ onto $\ker L_1$ has the
expression:
\begin{equation}\label{exp pi1}
\pi_1 V= \frac{(V|W_0)}{|W_0|^2}W_0,~~{\rm for ~all~ }V \in \R^9.
\end{equation}
where $(\cdot|\cdot)$ denotes the inner product in the complex
vector space $\C^9$. Then the solution $V_{01}$ has the following
form: for some scalar function $g$ (which is determined later by
\eqref{g_1} and \eqref{schr1}):
\begin{equation}\label{exp v01}
V_{01}=W_0g
\end{equation}

\medskip

\fbox{$|\textbf{p}|\geq 2$} The equation \eqref{eq u0p} becomes
$L_pV_{0p}=0$. For nonzero $(\o,k)$, $\det L_p=0\Longleftrightarrow
p^2\o^2(1-\frac{k^2}{\o^2})^2=(2-\frac{k^2}{\o^2})^2$. By our choice
of $(\o,k)$ that satisfies (\ref{dis rel1}), the matrix $L_p$ is
invertible if $|p|\geq 2$. Then the solutions are trivial
$V_{0p}=0$, for all $|p|\geq 2$.

\medskip

\fbox{\textbf{$p\leq-1$}} By reality of the datum, the only natural
choice is $V_{j,p}=\overline{V}_{j,-p}.$

\bigskip

\textbf{Equations for the terms in  $O(\e^{-1})$}. For all $p$:
\begin{equation}\label{eq u1p}
(\d_t+A(e_1)\d_y)V_{0p}+L_p
V_{1p}=\sum_{p_1+p_2=p}B(V_{0p_1},V_{0p_2}).
\end{equation}
\fbox{\textbf{p=0}} If $p=0$, the equation (\ref{eq u1p}) reduce to
\begin{equation}\label{eq v10}
L_0V_{10}=2B(V_{01},V_{0,-1}).\nonumber\end{equation}

 We calculate
\begin{equation}
B(V_{01},V_{0,-1})=g\bar{g}B(W_0,\overline{W}_0)=\frac{g\bar{g}}{2}\begin{pmatrix}
0 \\
-\g\O_0\times \overline{\O}_0-\g\overline{\O}_0\times \O_0\\
\g\O_0\times \overline{\O}_0+\g\overline{\O}_0\times \O_0
\end{pmatrix}=0.\nonumber
\end{equation}
This means we have $ B(\pi_{1}\cdot,\pi_{-1}\cdot)=0. $ The
relationship $\pi_0B(\pi_{1}\cdot,\pi_{-1}\cdot)=0$, which is
necessary to construct the Davey-Stewartson approximate solution in
Colin and Lannes \cite{CD_DS}, is the \emph{weak transparency}
condition introduced by Joly, M\'etivier and Rauch \cite{JMR_TMB}.
Here, we even have the following stronger result
\begin{equation}\label{weak str}
B(\pi_{\pm 1}\cdot,\pi_{\pm1}\cdot)=0.
\end{equation}

Then the equation \eqref{eq v10} becomes $L_0V_{10}=0$. We denote
$\pi_0$ the projection onto $\ker L_0$. Then we have $
V_{10}=\pi_0V_{10}.$ Now we calculate the vector space $\ker L_0$
and the projector $\pi_0$. We let $V_{10}=(E_{10},H_{10},M_{10})^t$,
where $E_{10},~H_{10}$ and $M_{10}$ are vectors in $\R^3$. The
equations (\ref{def A}) and \eqref{eq v10} give
\begin{equation}\label{kerl0 0}
L_0V_{10}=0 \Longleftrightarrow M_0\times
(H_{10}-M_{10})=0\Longleftrightarrow (H_{10}-M_{10})\sslash
(1,0,0)^t.
\end{equation}
Then we obtain a basis of $\ker L_0$:
\begin{equation}\label{kerl0}
\ker L_0=\rm{span}\big\{\begin{pmatrix}e_1
\\0\\0\end{pmatrix},\begin{pmatrix} e_2\\0\\0\end{pmatrix},
\begin{pmatrix}e_3\\0\\0\end{pmatrix},
\begin{pmatrix}0\\e_1\\0\end{pmatrix},
\begin{pmatrix}0 \\e_2\\e_2\end{pmatrix},
\begin{pmatrix}0\\e_3\\e_3\end{pmatrix},
\begin{pmatrix}0\\0\\e_1\end{pmatrix}
\big\},\nonumber
\end{equation}
where $e_1=(1,0,0)^t,e_2=(0,1,0)^t,e_3=(0,0,1)^t$. The orthogonal
projector $\pi_0$ onto $\ker L_0$ is
\begin{equation}\label{pi0}
\pi_0=
\begin{pmatrix}
I & 0 & 0\\
0& J_1 & J_2\\
0 & J_2 & J_1
\end{pmatrix}
\end{equation}
 with the blocks
\begin{equation}\label{j1 j2}
I=\begin{pmatrix}
1 & 0 & 0\\
0& 1 & 0\\
0 & 0& 1
\end{pmatrix},~~J_1=
\begin{pmatrix}
1 & 0 & 0\\
0& \frac{1}{2} & 0\\
0 & 0& \frac{1}{2}
\end{pmatrix},~~J_2=\begin{pmatrix}
0 & 0 & 0\\
0& \frac{1}{2} & 0\\
0 & 0& \frac{1}{2}
\end{pmatrix}.
\end{equation}

\medskip

\fbox{\textbf{p=1}}~~~If $p=1$, the equation becomes
$(\d_t+A(e_1)\d_y)V_{01}+L_1V_{11}=\sum_{p_1+p_2=1}B(V_{0p_1},V_{0p_2})$.

\medskip

From the analysis for the terms in $O(\e^{-2})$, we have that
$V_{00}=0$ and $V_{0p}=0,$~for all~$ |p|\geq2$, which give us
$\sum_{p_1+p_2=1}B(V_{0p_1},V_{0p_2})=0.$ Then the equation for
$p=1$ is
\begin{equation}\label{eq u11}
(\d_t+A(e_1)\d_y)V_{01}+L_1V_{11}=0.
\end{equation}
Applying $\pi_1$ to \eqref{eq u11}, by identity $\pi_1 L_1 =0$, we
obtain
\begin{equation}\label{eq u01 org}
\pi_1(\d_t+A(e_1)\d_y)V_{01}=0,
\end{equation}
a transport equation for the leading profile. In the geometric
optics approximation, the transport occurs at group velocity (See
Proposition 2.6 of \cite{T0} and Proposition 2.2 in \cite{JMR_Ind}
). Here we compute the transport operator $\pi_1 A(e_1)\d_y\pi_1$
explicitly. By (\ref{exp pi1}), the equation \eqref{eq u01 org} is
equivalent to
\begin{equation}
((\d_t+A(\d_x))V_{01}|W_0)=0.\nonumber
\end{equation}
By (\ref{kerL1}) and (\ref{exp v01}), we have that
\begin{equation}
(\d_tV_{01}|W_0)=\d_t g~|W_0|^2=\d_t g~
2\big(1+\frac{k^2}{\o^2}+(1-\frac{k^2}{\o^2})^2\big)\nonumber
\end{equation}
and that
\begin{equation}
(A(e_1)\d_y V_{01}|W_0)=\frac{4k}{\o}\d_{y}g.\nonumber
\end{equation}
Then the equation \eqref{eq u01 org} becomes the following transport
equation in the scalar function $g$:
\begin{equation}\label{eq trans}
\d_tg+\r\d_yg=0
\end{equation}
with the group velocity
\begin{equation}\label{gv}
\r=\big(1+\frac{k^2}{\o^2}+(1-\frac{k^2}{\o^2})^2\big)^{-1}\frac{2k}{\o}.
\end{equation}
\\
On the other hand, by applying $L_1^{-1}$ to (\ref{eq u11}), one has
\begin{equation}\label{1-pi u11}
(1-\pi_1)V_{11}=-L_1^{-1}\big((\d_t+A(e_1)\d_y)V_{01}\big)=-L_1^{-1}\big(A(e_1)\d_yV_{01}\big).
\end{equation}

Explicitly, the solution of equation \eqref{eq u11} is
$V_{11}=(E_{11},H_{11},M_{11})^t$ with
\begin{equation}\label{exp u11}
\left\{ \begin{aligned}
                 &E_{11}=-\frac{i\de k}{\o}f \O_0+\frac{\de}{\o}(-1+\frac{k\r}{\o})\d_yg \O_0,\\
                 &H_{11}=f \O_0,\\
                 &M_{11}=-\g f \O_0+\frac{2ik}{\o^2}(-1+\frac{k\r}{\o})\d_yg\O_0,\\
                          \end{aligned} \right.
                          \end{equation}

where $f$ is an unknown function that is yet to be determined (see
\eqref{f h}), and is associated with the component $\pi_1V_{11}$ of
$V_{11}$.

\medskip

\fbox{\textbf{p=2}}~~~If $p=2$, the equation becomes
$(\d_t+A(e_1)\d_y)V_{02}+L_2V_{12}=\sum_{p_1+p_2=2}B(V_{0p_1},V_{0p_2})$.

\medskip

Since $V_{00}=0,~V_{0p}=0$, for all $|p|\geq2$ and (\ref{weak str}),
the equation for $p=2$ is actually $L_2V_{12}=B(V_{01},V_{01})=0,$
which gives trivial solution $V_{12}=0$ because the matrix $L_2$ is
invertible.

\medskip

\fbox{\textbf{p$\geq$3}}~~~The equations are
$(\d_t+A(e_1)\d_y)V_{0p}+L_pV_{1p}=\sum_{p_1+p_2=p}B(V_{0p_1},V_{0p_2})$,
and we directly obtain the solutions $V_{1p}=0$.\\

\fbox{\textbf{p$<$0}}~~~For negative $p$, we simply take $V_{jp}=\overline{V}_{j,-p}$.\\

\textbf{Equations for the terms in  $O(\e^{0})$}. For all $p$,
\begin{equation}\label{eq u2p}
\d_\t
V_{0p}+(\d_t+A(e_1)\d_y)V_{1p}+L_pV_{2p}=2\sum_{p_1+p_2=p}B(V_{0p_1},V_{1p_2}).
\end{equation}
\medskip
\fbox{\textbf{p=0}} For p=0, the equation (\ref{eq u2p}) becomes
\begin{equation}\label{eq u20}
\d_\t
V_{00}+(\d_t+A(e_1)\d_y)V_{10}+L_0V_{20}=2\sum_{p_1+p_2=0}B(V_{0p_1},V_{1p_2}).
\end{equation}
We calculate the right hand side
\begin{equation}
\sum_{p_1+p_2=0}B(V_{0p_1},V_{1p_2})=B(V_{01},V_{1,-1})+B(V_{0,-1},V_{11})=2{\rm
Re} B(V_{0,-1},V_{11}).\nonumber
\end{equation}
Together with the choice $V_{00}=0$, the equation (\ref{eq u20})
becomes
\begin{equation}\label{eq u201}
(\d_t+A(e_1)\d_y)V_{10}+L_0V_{20}=4{\rm Re}B(V_{0,-1},V_{11}).
\end{equation}
Applying $\pi_0$ to (\ref{eq u201}), we obtain that
\begin{equation}\label{ds 2eo}
 (\d_t+\pi_0A(e_1)\d_y\pi_0)\pi_0V_{10}=4\pi_0{\rm{Re}}B(V_{0,-1},V_{11}).
\end{equation}

By (\ref{def B}), (\ref{kerL1}), (\ref{exp v01}), (\ref{pi0}) and
(\ref{exp u11}), we calculate the source term of (\ref{ds 2eo}):
\begin{equation}\label{pi0 B}
4\pi_0{\rm Re}
B(V_{0,-1},V_{11})=\begin{pmatrix}0\\A_0\\-A_0\end{pmatrix}
\end{equation}
with
\begin{equation}\label{A0 0}
A_0=\begin{pmatrix}k_0\\0\\0\end{pmatrix},~k_0=\frac{4k\de}{\o^2}(1-\frac{k\r}{\o})\d_y|g|^2.
\end{equation}

By (\ref{def A}), (\ref{pi0}) and (\ref{j1 j2}), we have that
\begin{equation}\label{pi0 A pi0}
\pi_0A(e_1)\pi_0=\begin{pmatrix}
0 & (-e_1 \times)J_1  & (-e_1 \times)J_2\\
J_1(e_1 \times) & 0 & 0\\
J_2(e_1 \times) & 0 & 0
\end{pmatrix}.
\end{equation}
We denote $V_{10}=(E_{10},H_{10},M_{10})$, by (\ref{pi0 B}) and
(\ref{pi0 A pi0}), the equation (\ref{ds 2eo}) becomes

\begin{equation}\label{dsch2e1}
\left\{ \begin{aligned}
         &\d_t E_{10}-(e_1 \times)J_1 \d_y H_{10}  -(e_1\times)J_2 \d_y M_{10} =0,\\
                  &\d_t H_{10}+J_1(e_1 \times)\d_yE_{10}  =A_0,\\
                 &\d_t M_{10}+J_2(e_1 \times )\d_yE_{10}  =-A_0.
                          \end{aligned} \right.
                          \end{equation}

We now introduce a decomposition inspired from the averaging method
of Lannes \cite{Lav}: for any vector function $V(t,y)$, we use the
notation $\langle V \rangle$ and $V^*$ to denote the two parts of
the decomposition $V=\langle V \rangle +V^*$, where the part
$\langle V \rangle$ satisfies the transport equation
$(\d_t+\r\d_y)\langle V \rangle=0$ with the same group velocity $\r$
as the leading term $V_{01}$. To solve (\ref{dsch2e1}),  it is
sufficient to solve the following two systems:
\begin{equation}\label{dsch2e2}
\left\{ \begin{aligned}
         &-\r\d_y\langle E_{10}\rangle-(e_1 \times)J_1\d_y\langle H_{10} \rangle -(e_1 \times)J_2\d_y\langle M_{10} \rangle=0,\\
                  &-\r\d_y\langle H_{10}\rangle+J_1(e_1 \times)\d_y\langle E_{10} \rangle =A_0,\\
                 &-\r\d_y\langle M_{10}\rangle+J_2(e_1 \times)\d_y \langle E_{10} \rangle=-A_0
                          \end{aligned} \right.
                          \end{equation}
and
\begin{equation}\label{ds ch2e3}
\left\{ \begin{aligned}
         &\d_t E_{10}^*-(e_1 \times)J_1 \d_y H_{10}^*  -(e_1 \times)J_2 \d_yM_{10}^* =0,\\
                  &\d_t H_{10}^*+J_1(e_1 \times)\d_yE_{10}^*  =0,\\
                 &\d_t M_{10}^*+J_2(e_1 \times)\d_yE_{10}^*  =0.
                          \end{aligned} \right.
                          \end{equation}

By taking $-\r\d_y$ to the second and third equations of
(\ref{dsch2e2}), we obtain that
\begin{equation}\label{ds ch2e4}
\left\{ \begin{aligned}
                  &\r^2\d_y^2 \langle H_{10}\rangle+J_1(e_1 \times)
                  [(e_1 \times)J_1\d_y^2\langle H_{10}\rangle
                  +(e_1 \times)J_2 \d_y^2\langle M_{10}\rangle] =-\r\d_yA_0,\\
                 &\r^2\d_y^2 \langle M_{10}\rangle+J_2(e_1 \times)
                 [(e_1 \times)J_1\d_y^2\langle H_{10}\rangle  +(e_1 \times)J_2\d_y^2\langle M_{10}
                 \rangle]=\r\d_yA_0.
                          \end{aligned} \right.
                          \end{equation}

~

Since $V_{10}=(E_{10},H_{10},M_{10})\in \ker L_0$, by (\ref{kerl0
0}), we may suppose that
\begin{equation}\label{pt v100}
H_{10}=\begin{pmatrix}h_1\\h_2\\h_3\end{pmatrix},~
M_{10}=\begin{pmatrix}m_1\\h_2\\h_3\end{pmatrix}.
\end{equation}
Then by (\ref{def e1}) and (\ref{j1 j2}), the equation (\ref{ds
ch2e4}) becomes
\begin{equation}\label{ds ch2e}
\left\{ \begin{aligned}
                  &\r^2\d_y^2\langle h_1\rangle =-\frac{4k\de\r}{\o}(1-\frac{k\r}{\o})\d_y^2|g|^2,\\
                 &(\r^2-\frac{1}{2})\d_y^2\langle h_2\rangle=(\r^2-\frac{1}{2})\d_y^2\langle h_3\rangle=0,\\
                 &\r^2\d_y^2\langle m_1\rangle=\frac{4k\de\r}{\o}(1-\frac{k\r}{\o})\d_y^2|g|^2.\\
                          \end{aligned} \right.
                          \end{equation}

A solution to (\ref{ds ch2e}) is
\begin{equation}\label{sl ch2e}
\langle h_1\rangle =-\frac{4k\de\r}{\o
}(1-\frac{k\r}{\o})|g|^2,\quad \langle m_1\rangle=\frac{4k\de}{\o
\r}(1-\frac{k\r}{\o})|g|^2,\quad
 \langle h_2\rangle=\langle h_3\rangle=0.
                          \end{equation}

Plugging (\ref{pt v100}) and (\ref{sl ch2e}) into the first equation
of (\ref{dsch2e2}), we have that $\d_y \langle E_{10}\rangle=0$
which gives the trivial solution $\langle E_{10}\rangle=0$.

\medskip
We solve the equation (\ref{ds ch2e3}) by choosing the trivial
solution $E_{10}^*=H_{10}^*=M_{10}^*=0$. By doing so, we will not
see a source term in \eqref{f trans} and there will be no secular
growth for $\pi_1V_{11}$.

\medskip

With our choice for $V_{10}^*$ and $\langle V_{10}\rangle$, we have
a solution to (\ref{ds 2eo}):
\begin{equation}\label{sl v10}
V_{10}=\begin{pmatrix}E_{10}\\H_{10}\\M_{10}\end{pmatrix}=\frac{
4k\de }{\o
\r}(1-\frac{k\r}{\o})\begin{pmatrix}0\\-e_1\\e_1\end{pmatrix}|g|^2,
\end{equation}
where the vector $e_1$ is given in (\ref{def e1}).

Back to (\ref{eq u201}), by (\ref{exp v01}), (\ref{exp u11}) and
(\ref{sl v10}), we obtain that $ L_0V_{20}=0,$ which admits the
trivial solution $V_{20}=0$. Now equation (\ref{eq u201}) is solved.

\medskip

\fbox{\textbf{p=1}}~~For p=1, the equation (\ref{eq u2p}) becomes
\begin{equation}\label{eq u21}
\d_\t V_{01}+(\d_t+A(e_1))\d_y V_{11}+L_1V_{21}=2B(V_{01},V_{10}).
\end{equation}

Applying $\pi_1$ to \eqref{eq u21} gives
\begin{equation}\label{ds 1e}
\d_\t V_{01}+\pi_1(\d_t+A(e_1))\d_y V_{11}=2\pi_1B(V_{01},V_{10}).
\end{equation}
By \eqref{1-pi u11}, we have that

$$\pi_1(\d_t+A(e_1))\d_y V_{11}=(\d_t+\pi_1A(e_1)\d_y\pi_1)\pi_1 V_{11}-\pi_1A(e_1)\d_y L_1^{-1}\big(A(e_1)\d_yV_{01}\big),$$

We already have $\pi_1A(e_1)\d_y\pi_1=\r \d_y$ with group velocity
$\r$ in \eqref{gv}. In diffractive optics approximation, we have
$\displaystyle {-\pi_1A(e_1)\d_y
L_1^{-1}A(e_1)\d_y=-\frac{i}{2}\o''(k)\d_y^2},$ which gives a
Schr\"odinger equation. (See Proposition 2.6 in \cite{T0} or
Proposition 4.1 in \cite{JMR_Ind}). Here we compute the
Schr\"odinger operator $-\pi_1A(e_1)\d_y L_1^{-1}A(e_1)\d_y$
explicitly.

\medskip

By (\ref{exp pi1}), the equation (\ref{ds 1e}) is equivalent to
\begin{equation}\label{ds 1ein}
(\d_\t
V_{01}|W_0)+((\d_t+A(e_1)\d_y)V_{11}|W_0)=2(B(V_{01},V_{10})|W_0).
\end{equation}
\medskip
 Since $V_{01}=gW_0$, we have $(\d_\t V_{01}|W_0)=\d_\t g
|W_0|^2$. By (\ref{kerL1}), (\ref{eq trans}) and (\ref{exp u11}), we
have
$$(\d_t V_{11}|W_0)=|W_0|^2\d_tf+\frac{2ik\r}{\o^2}(1-\frac{k\r}{\o})(1-2\g)\d_y^2g$$
and
$$(A(e_1)\d_y V_{11}|W_0)=\frac{4k}{\o}\d_y f-\frac{2i}{\o}(1-\frac{k\r}{\o})\d_y^2g.$$
 For the right hand side of (\ref{ds 1ein}), by (\ref{def
B}), (\ref{exp pi1}) and (\ref{sl v10}),
$$2(B(V_{01},V_{10})|W_0)=\frac{ 8ik }{\o
\r}(1-\frac{k\r}{\o})(1-\g^2)g|g|^2.$$

We denote the real constant
\begin{equation}\label{ct D}
\nu:=\frac{|W_0|^2}{2}=1+\frac{k^2}{\o^2}+(1-\frac{k^2}{\o^2})^2,\nonumber
 \end{equation}
then the equation (\ref{ds 1e}) is equivalent to
\begin{equation}\label{ds 1ech}
\d_\t g+\frac{i}{\nu \o
}[\frac{k\r}{\o}(1-2\g)-1](1-\frac{k\r}{\o})\d_y^2g+(\d_t+\r \d_y)f
=\frac{ 4ik }{\o \r}(1-\frac{k\r}{\o})(1-\g^2)g|g|^2.
\end{equation}

We decompose (\ref{ds 1ech}) into two equations: a transport
equation in $f$ related to $\pi_1 V_{11}$:
\begin{equation}\label{f trans}
 (\d_t+\r \d_y)f=0,
 \end{equation}
and a cubic Schr\"odinger equation in $g$ related to $V_{01}$:
\begin{equation}\label{schr}
\d_\t g+i\nu_1\d_y^2g =i\nu_2g|g|^2,
 \end{equation}
where the real constants $\nu_1$ and $\nu_2$ are defined as
\begin{equation}\label{coe di}
\nu_1=\frac{1}{\nu \o
}\big[\frac{k\r}{\o}(1-2\g)-1\big]\big(1-\frac{k\r}{\o}\big),\quad
\nu_2=\frac{ 4k }{\o \r}\big(1-\frac{k\r}{\o}\big)(1-\g^2).\nonumber
\end{equation}

 Since the scalar function $g$ satisfies the
transport equation (\ref{eq trans}), then $g$ is of the form
\begin{equation}\label{g_1}
g(\t, t,y)=g_1(\t,z)|_{z=y-\r t},
\end{equation}
for some scalar function $g_1$. By \eqref{schr}, the function
$g_1(\t,z)$ solves
\begin{equation}\label{schr1}
\d_\t g_1+i\nu_1\d_z^2g_1 =i\nu_2g_1|g_1|^2.
 \end{equation}
By the classical theory of semilinear Schr\"odinger equations, given
any regular initial datum $g_1(\t,z)=a_0(z)\in H^s,~s>2+1/2$, the
equation \eqref{schr1} admits a unique local-in-time solution, with
existence time $T_1^*>0$ independent of $\e$, depending on the $H^s$
norm of $a_0$. One has the following estimate for all $T<T_1^*$:
$$
\sup_{0\leq \t \leq T} \|g_1(\t,\cdot)\|_{H^s} \leq C(T)<+\infty.
$$
Then there exists a unique solution $g\in
L^\infty([0,T_s^*[_\t\times \R_t,H^s)$ to \eqref{eq trans} and
\eqref{schr} which takes the form \eqref{g_1}.

\smallskip

Back to equation (\ref{eq u21}), one can obtain the solution
$V_{21}=(E_{21},H_{21},M_{21})$ in terms of the functions $f,~g$ and
an unknown scalar function $h$:
\begin{equation}\label{exp v21}
\left\{ \begin{aligned}
                 &E_{21}=-\frac{i\de k}{\o}h\O_0+\frac{\de}{\o}(-1+\frac{k\r}{\o})\d_yf \O_0+i\nu_3\d_y^2 g\O_0,\\
                 &H_{21}=h \O_0 ,\\
                 &M_{21}=-\g h
                 \O_0+\frac{2ik}{\o^2}(-1+\frac{k\r}{\o})\d_y f \O_0+\nu_4\d_y^2 g  \O_0.\\
                          \end{aligned} \right.
                          \end{equation}

The constants $m_3$ and $m_4$ are real and defined as
\begin{equation}
\nu_3=\frac{\de
\r}{2\o^2}(\frac{k\r}{\o}-1)\big(\frac{k\r}{\o}(1-2\l)-3\big),~~
\nu_4=-\frac{1}{\o^2}(\frac{k\r}{\o}-1)\big(\frac{4k\r}{\o}-
(\frac{k\r}{\o})^2 (1-2\l)-1\big).\nonumber
\end{equation}

We define $h$ and $f$ by
\begin{equation}\label{f h}
h=0,~f=i\big[\frac{2k}{\o^2}(-1+\frac{k\r}{\o})\big]^{-1}\nu_4 \d_y
g.
\end{equation}
Because the function $g$ satisfies the transport equation (\ref{eq
trans}), the function $f$ defined by \eqref{f h} is a solution to
\eqref{f trans}. The point is that, with this choice of $h$ and $f$,
there holds Lemma \ref{pt r} below. This is the key property that,
together with Lemma \ref{pt b}, allows the change of variable
\eqref{error rescale} in Section 4.2. By \eqref{exp v21}and \eqref{f
h}, we directly have
\begin{equation}\label{pt v21}
H_{21}=M_{21}=0.
\end{equation}

 \fbox{\textbf{p=2}}~~For $p=2$, the equation (\ref{eq u2p}) becomes $L_2V_{22}=2B(V_{01},V_{11}).$ By (\ref{exp v01}) and (\ref{exp
u11}), we have $B(V_{01},V_{11})=0$, and by the invertibility of
$L_2$, the unique solution is $V_{22}=0$.

\medskip

\fbox{\textbf{p$\geq$3}}~~For $p\geq3$, to equation (\ref{eq u2p}):
$V_{2p}=0$.

\end{subsection}

\begin{subsection}{The approximate solution and the remainder}

The vector space $\ker L(i(\o,k))$ is one-dimensional with generator
$W_0$ defined in (\ref{kerL1}). The initial leading amplitude $a(y)
\in \ker L(i(\o,k)) $ and $a\in H^s$ with $s>2+1/2$,  so there
exists a scalar function $a_0(y)\in H^s$ such that $a(y)=a_0(y)W_0$.

\smallskip

Given initial datum $g(0,0,y)=a_0(y)$, the transport equation
\eqref{eq trans} and the Schr\"odinger equation (\ref{schr}) admit a
unique solution which takes the form \eqref{g_1} over time interval
$[0,T_1^*)$ , where $T_1^*>0$ independent of $\e$ is the existence
time of the cubic Schr\"odinger equation \eqref{g_1}. For any
$T<T_1^*$ $g\in L^\infty([0,T]_\t \times \R_t, H^s)$. We then choose
$(f,h)$ as in (\ref{f h}), one has $f\in L^\infty([0,T]_\t \times
\R_t, H^{s-1})$ and the following estimates for any $T<T_1^*$:

\smallskip

By (\ref{exp v01}), we have $V_{01}\in L^\infty([0,T]_\t \times
\R_t, H^s)$. By (\ref{exp u11}), we have $V_{11}\in
L^\infty([0,T]_\t \times \R_t, H^{s-1})$. By (\ref{sl v10}), we have
$V_{10}\in L^\infty([0,T]_\t \times \R_t, H^{s})$. By (\ref{exp
v21}), we have $V_{21}\in L^\infty([0,T]_\t \times \R_t, H^{s-2})$.

\medskip

We then let
\begin{equation}\label{def va0}
\left\{ \begin{aligned}
                 &V^0(t,y,\th):=V_{01}(\e t,t,y)e^{i\th}+\overline{V}_{01}(\e t,t,y)e^{-i\th},\\
                 &V^1(t,y,\th):=\big(V_{10}
   +V_{11}e^{i\th}+\overline{V}_{11}e^{-i\th}\big)(\e t,t,y),\\
   &V^{2}(t,y,\th):=\big(V_{21}e^{i\th}+\overline{V}_{21}e^{-i\th}\big)(\e t,t,y).\\
                          \end{aligned} \right.
                          \end{equation}

Define $V^a(t,y,\th)$ as
\begin{equation}\label{def va}
V^a=V^0+\e V^1+\e^2 V^2,\nonumber
\end{equation}
then the profile $V^a$ satisfies
\begin{equation}\label{eq va}
\left\{ \begin{aligned}
                 &\d_tV^a+A(e_1)\d_yV^a+\frac{1}{\e}\{-\o\d_{\th}+A(e_1)k\d_{\th}+ L_0\}
V^a=B(V^a,V^a)+\e^2 R,\\
                 &V^a(0,y,\th)=V(0,y,\th)+\e b(y,\th)+\e^2 b_1(y,\th) \\
                          \end{aligned} \right.
                          \end{equation}
over long time interval $[0,T_1^*/\e[$. The initial perturbations
$b$ and $b_1$ have the expressions
\begin{eqnarray}\label{exp b}
&&b(y,\th)=-a_1(y,\th)+\big(V_{10}+V_{11}e^{i\th}+\overline{V}_{11}e^{-i\th}\big)(0,0,y),\\
\label{exp b1}&&b_1(y,\th)=-a_2(y,\th)+\big(V_{21}e^{i\th}+
\overline{V}_{21}e^{-i\th}\big)(0,0,y),
\end{eqnarray}
and the remainder $R(t,y,\th)$ is defined as
\begin{equation}\label{exp r}
R=-2B(V^0,V^2)-B(V^1,V^1)-\e2B(V^1,V^2)-\e^2B(V^2,V^2).
\end{equation}
For any $T<T^*_1$, we have the following estimates
\begin{equation}\label{est r b}
R\in L^\infty ([0,T/\e]_t,
H^1(\T_{\th},H^{s-2}(\R^1_y))),~(b,b_1)\in
H^1(\T_{\th},H^{s-2}(\R^1_y)).\nonumber
\end{equation}
At this stage, Proposition \ref{exis app1d} is proved.
\smallskip

We show more properties of $R$ and $b$ in the following two lemmas,
according to the WKB expansion in Section 2.1.
\begin{lem}\label{pt b}
 For initial perturbations $b$ and $b_1$ defined in \eqref{exp b}, we have
 the equivalence
\begin{equation}\label{ini diff2}
|\Pi_s(V(0)-V^a(0))|_{H^1(\T_{\th},H^{s-2}(\R^1_y))}=O(\e^2)
\Longleftrightarrow\Pi_s b=0 .
\end{equation}
Moreover, there holds $\Pi_s b=0$ if and only if the initial
corrector $a_1$ in {\rm \eqref{ini org}} is given by \eqref{exp a1}
below.

\end{lem}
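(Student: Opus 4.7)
The plan is to prove both equivalences in Lemma \ref{pt b} by a short direct computation using the explicit WKB formulas for $V_{10}$ and $V_{11}$ already derived in Section 2.1, together with the block structure of $\Pi_s$ recorded in \eqref{pi0s}. No estimate beyond the boundedness of $\Pi_s$ is needed; the lemma is essentially an algebraic identification.

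First, for the equivalence \eqref{ini diff2}, I subtract the two Cauchy data in \eqref{eq va} to get
\begin{equation}
V(0,y,\th)-V^a(0,y,\th)= -\e\, b(y,\th)-\e^2\, b_1(y,\th).\nonumber
\end{equation}
Since $b$ and $b_1$ both lie in $H^1(\T_\th, H^{s-2}(\R_y))$ with norms independent of $\e$, applying the bounded projector $\Pi_s$ shows that the $H^1(\T_\th, H^{s-2})$-norm of the left-hand side is $O(\e^2)$ if and only if $\Pi_s b=0$. This gives the first equivalence.

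Second, I unpack $\Pi_s b$ using \eqref{exp b}. The main structural observation is that $\Pi_s V_{11}\equiv 0$: the formulas \eqref{exp u11} express each of the three $\R^3$-blocks $E_{11}, H_{11}, M_{11}$ as a scalar multiple of the vector $\Omega_0=(0,i\de,1)^t$ from \eqref{kerL1}, whose first coordinate vanishes; since $\Pi_s=\mathrm{diag}\{1,0,0,1,0,0,1,0,0\}$ extracts precisely the first coordinate of each $\R^3$-block, this forces $\Pi_s V_{11}=0$. In the same way, inspection of \eqref{sl v10} shows that $V_{10}$ is a scalar multiple of $(0,-e_1,e_1)^t$, whose nonzero entries sit exactly at positions $4$ and $7$; hence $\Pi_s V_{10}=V_{10}$. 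Combining these with \eqref{exp b},
\begin{equation}
\Pi_s b(y,\th)=-\Pi_s a_1(y,\th)+V_{10}(0,0,y).\nonumber
\end{equation}

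Third, the condition $\Pi_s b=0$ therefore reduces to $\Pi_s a_1(y,\th)=V_{10}(0,0,y)$; decomposing $a_1=\Pi_s a_1+\Pi_0 a_1$ according to \eqref{pi0s}, this is exactly the prescription of the $\Pi_s$-component of $a_1$ given in the upcoming formula \eqref{exp a1}, the $\Pi_0$-component remaining free (and corresponding to the $\tilde{a}$ in \eqref{ini-choix}). The only nontrivial point in the argument is the cancellation $\Pi_s V_{11}=0$, which I regard as the main obstacle; but it is not really an obstacle at all, since it follows at once from the algebraic fact that the generator of $\ker L_1$ exhibited in \eqref{kerL1} lies in $\ker \Pi_s$. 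Everything else is linear bookkeeping on the explicit profiles of Section 2.1.
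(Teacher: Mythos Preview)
Your proof is correct and follows essentially the same route as the paper's: both use \eqref{pi0s} and \eqref{exp u11} to get $\Pi_s V_{11}=0$, then read off from \eqref{exp b} that $\Pi_s b=0$ is equivalent to the prescription \eqref{exp a1} for $a_1$, with the first equivalence following from $V(0)-V^a(0)=-\e b-\e^2 b_1$. Your write-up is slightly more explicit (you spell out $\Pi_s V_{10}=V_{10}$ and the reason $\Omega_0\in\ker\Pi_s$), but the argument is the same.
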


\begin{proof}
By (\ref{pi0s}) and (\ref{exp u11}), we have that $\Pi_s V_{11}=0.$
Then $\Pi_s b=0$, by (\ref{exp b}), it is necessary and sufficient
to have
\begin{equation}\label{exp a1}
a_1(y,\th)=V_{10}(0,0,y)+\tilde{a}_1(y,\th)=\frac{ 4k\de }{\o
\r}(1-\frac{k\r}{\o})\begin{pmatrix}
0\\
- e_1\\
 e_1
\end{pmatrix}|a_0(y)|^2+\tilde{a}_1(y,\th),
\end{equation}
where $\tilde{a}_1$ satisfies $\Pi_s \tilde{a}_1=0$. The projector
$\Pi_s$ is defined in \eqref{pi0s}. Direct calculation gives the
equivalence \eqref{ini diff2}

\end{proof}

\begin{lem}\label{pt r}
With $f$ and $h$ given by {\rm(\ref{f h})}, the remainder $R$
defined in \eqref{exp r} satisfies
$$\Pi_s R=O(\e)~ {\rm in}~
L^\infty ([0,T/\e]_t, H^1(\T_{\th},H^{s-2}(\R^1_y))), \quad {\rm for
~any}~ T<T_1^*.$$
\end{lem}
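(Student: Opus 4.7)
My plan is to reduce the lemma to an exact identity $\Pi_s R^{(0)} = 0$ on the $O(1)$ part of $R$, and then to expose this identity as a single algebraic cancellation that is precisely the design constraint behind the ansatz (\ref{f h}).

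First, I would split $R = R^{(0)} + \bigl[-2\e B(V^1,V^2) - \e^2 B(V^2,V^2)\bigr]$ with $R^{(0)} := -2B(V^0,V^2) - B(V^1,V^1)$; the bracketed remainder is $O(\e)$ in $L^\infty([0,T/\e]_t, H^1(\T_\th, H^{s-2}(\R_y)))$ by the Sobolev bounds on $V^1, V^2$ from Section 2.2. Inspection of (\ref{def B}) and (\ref{pi0s}) shows that $\Pi_s B(u,v)$ depends only on the first coordinate of $u^3 \times v^2 + v^3 \times u^2$. The term $B(V^0,V^2)$ in fact vanishes identically, because by (\ref{pt v21}) the $H$- and $M$-components of $V^2$ are zero, killing both factors that enter $B$.

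For the remaining piece $\Pi_s B(V^1,V^1)$, I would Fourier-expand in $\th$ and exploit two structural facts: by (\ref{sl v10}), $H_{10}$ and $M_{10}$ are parallel to $e_1$; and by (\ref{exp u11}), $H_{11}$ and $M_{11}$ are both proportional to $\O_0 = (0,i\de,1)$. Since $e_1 \times w$ has zero first coordinate for every $w \in \R^3$, and since $\O_0 \times \O_0 = 0$, every Fourier mode $e^{ip\th}$ with $p \ne 0$ contributes zero to the first coordinate of $V^{1,3}\times V^{1,2}$. Using $\O_0 \times \overline{\O}_0 = 2i\de\, e_1$, the mean-mode contribution reduces to $-4\de\,\mathrm{Im}(\tilde m\bar f)$, where $\tilde m = -\g f + \tfrac{2ik}{\o^2}(-1+\tfrac{k\r}{\o})\d_y g$ is the $\O_0$-coefficient of $M_{11}$ and $f$ is the $\O_0$-coefficient of $H_{11}$.

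The decisive step is to plug in (\ref{f h}): $f = iC\,\d_y g$ with the real constant $C = \nu_4/\bigl[\tfrac{2k}{\o^2}(-1+\tfrac{k\r}{\o})\bigr]$ (real because $\nu_4 \in \R$). Direct substitution gives $\tilde m\bar f = -\g|f|^2 + \tfrac{2kC}{\o^2}(-1+\tfrac{k\r}{\o})|\d_y g|^2 \in \R$, hence $\mathrm{Im}(\tilde m\bar f) = 0$, yielding $\Pi_s R^{(0)} = 0$. The main obstacle is purely organizational: one must arrange the finite Fourier-mode case analysis so that this single non-trivial cancellation is isolated. No analytic difficulty is hidden, and the cancellation itself is precisely what motivates the specific choice (\ref{f h}), making Lemma \ref{pt r} a consistency check on the ansatz rather than an independent estimate.
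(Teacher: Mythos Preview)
Your argument is correct and follows essentially the same route as the paper: split off the $O(\e)$ tail, kill $B(V^0,V^2)$ via $H_{21}=M_{21}=0$ from (\ref{pt v21}), and then dispose of $\Pi_s B(V^1,V^1)$ mode by mode, with the only nontrivial cancellation coming from the mean mode $B(V_{11},\overline{V}_{11})$. Your condition $\mathrm{Im}(\tilde m\bar f)=0$ is exactly the paper's identity $\bar f\,\d_y g + f\,\d_y\bar g = 0$ (both expressions equal $iD(\bar f\,\d_y g + f\,\d_y\bar g)$ up to a real factor), so the decisive algebraic step is identical; the only cosmetic difference is that you project onto $\Pi_s$ at the outset and track first coordinates, whereas the paper computes the full $B(V_{11},\overline{V}_{11})$ and $B(V_{10},V_{10})$, $B(V_{11},V_{11})$ before projecting.
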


\begin{proof}
 By (\ref{def va0}), we have
\begin{equation}
 B(V^0,V^2)=B(V_{11},\overline{V}_{21})+e^{2i\th}B
 (V_{11},V_{21})+c.c.\nonumber
\end{equation}
By \eqref{pt v21}, a consequence of \eqref{f h}, the components
$H_{21}$ and $M_{21}$ of $V_{21}$ satisfy $H_{21}=M_{21}=0$. Then by
the definition of $B$ in (\ref{def B}), it is easy to obtain
$B(V^0,V^2)=0.$ Also by (\ref{def va0}), we have that
\begin{equation}
 B(V^1,V^1)=B(V_{10},V_{10})+B(V_{11},\overline{V}_{11})+e^{i\th}B
 (V_{10},V_{11})+e^{i2\th}B
 (V_{11},V_{11})+c.c.\nonumber
\end{equation}

By (\ref{def B}), (\ref{exp v01}), (\ref{exp u11}) and (\ref{sl
v10}), direct calculation gives
\begin{equation}
B(V_{10},V_{10})=B(V_{11},V_{11})=0,~~
B(V_{11},\overline{V}_{11})=\begin{pmatrix}0\\A_1\\-A_1\end{pmatrix},\nonumber
\end{equation}
where
\begin{equation}
A_1=\frac{ik}{\o^2}(\frac{k\r}{\o}-1)(\bar{f}\d_y g +f \d_y \bar{g}
)(\O_0\times\overline{\O}_0).\nonumber
\end{equation}
With $f$ given in (\ref{f h}), it is easy to check that $\bar{f}\d_y
g +f \d_y \bar{g}=0$, and then $B(V_{11},\overline{V}_{11})=0$.
Through direct calculation, we have $\Pi_s B (V_{10},V_{11}) =0$.
Then $\Pi_s B(V^1,V^1)=0$, and by the definition of $R$ in
\eqref{exp r}, we have
\begin{equation}\label{new r}
\Pi_s R=-\e\Pi_s\big(2B(V^1,V^2)+\e B(V^2,V^2)\big).
\end{equation}
The lemma is proved.

\end{proof}

We constructed a WKB solution $v^a$ to \eqref{equ app}. We now
investigate the question whether the WKB solution actually
approximates the exact solution over an interval of existence.

\end{subsection}

\end{section}

\section{Maxwell-Bloch structure and long time existence.}
In this section, we first show that the system \eqref{1d sys pro}
has the Maxwell-Bloch structure, then prove a long time existence
result by normal form reduction.

\subsection {Spectral decomposition}
As mentioned in Section 1.2, for the symmetric matrix $A( e_1)\xi+
L_0/i$, we have the following spectral decomposition
\begin{equation}\label{sp dec}
A( e_1)\xi +\frac{L_0}{i}=\sum_{j=1}^9 \l_j(\xi)\Pi_j(\xi).
\end{equation}
The characteristic variety (that is, the union of the graphs $\xi
\mapsto \l_j(\xi)$) is pictured on figure 1. We have that for any
$\xi\in\R$, $ 1\leq j\leq6,$ and $7\leq j'\leq 9$:
\begin{equation}\label{pt eig v}
\l_1(\xi)\geq 2,\quad \l_2(\xi)\geq0,\quad 0\leq\l_3(\xi)<1, \quad
\l_j(\xi)=-\l_{7-j}(\xi),\quad \l_{j'}(\xi)=0.
\end{equation}
 For $1\leq j\leq 6$, we have the dispersion
relations
\begin{equation}\label{dis rel}
\xi^2=\frac{\l_j(\xi)+2\de_j}{\l_j(\xi)+\de_j}\l_j^2(\xi),\quad
\de_j=(-1)^j.
\end{equation}
\begin{figure}
\begin{center}
\psfragscanon
\psfrag{a}{$\l_1$}\psfrag{b}{$\l_2$}\psfrag{c}{$\l_3$}\psfrag{d}{$\l_4$}
\psfrag{e}{$\l_5$}\psfrag{f}{$\l_6$}\psfrag{w}{$\l$}\psfrag{k}{$\xi$}
\includegraphics[width=8cm]{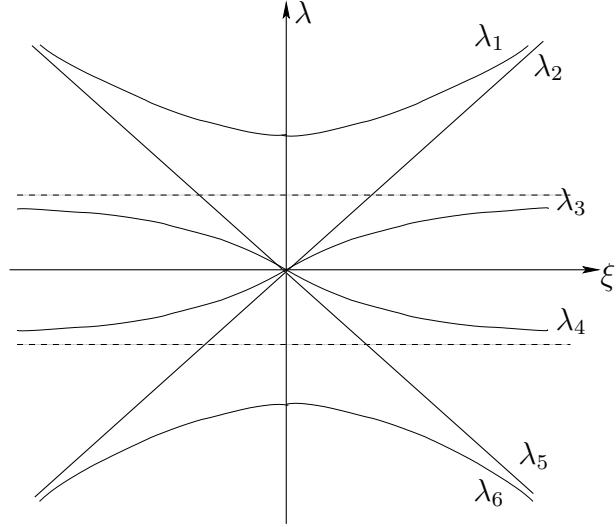}
\end{center}
\caption{The characteristic
 variety.}
\end{figure}

In order to reveal the Maxwell-Bloch structure, we define
\begin{equation}\label{v0 vs}
V_0=\Pi_0V=\sum_{j=1}^6\Pi_j(\e \D_y+k\D_\th) V ,\quad V_s=\Pi_s
V=\sum_{j=7}^9\Pi_j(\e \D_y+k\D_\th) V,
\end{equation}
where the total projectors $\Pi_0$ and $\Pi_s$ are defined in
\eqref{sp decom2} and the notation $\D:=\d/i.$ By (\ref{pi0s}), the
projector $\Pi_0$ and $\Pi_s$ are actually constant matrices. By
equation \eqref{sys prof} in $V$, the system in $(V_0,V_s)$ is
\begin{equation}\label{v0vs}
\left\{ \begin{aligned}
                 &\d_t V_0 +\frac{i}{\e}\mathcal{A}_0 V_0-\frac{\o \d_\th}{\e}V_0=\Pi_0B(V_0+V_s,V_0+V_s),\\
                 &\d_t V_s-\frac{\o \d_\th}{\e}V_s=\Pi_sB(V_0+V_s,V_0+V_s),\\
                          \end{aligned} \right.
                          \end{equation}
where the Fourier multiplier
\begin{equation}\label{A0}
\mathcal{A}_0=\sum_{j=1}^6
\l_j(\e\D_y+k\D_\th)\Pi_j(\e\D_y+k\D_\th).
                          \end{equation}

By \eqref{def B} and \eqref{pi0s}, we have
\begin{equation}
\Pi_0B(\Pi_0,\Pi_0)=\Pi_0B(\Pi_s,\Pi_s)=\Pi_sB(\Pi_s,\Pi_s)=\Pi_sB(\Pi_0,\Pi_s)=0.
\end{equation}
Then the equation \eqref{v0vs} becomes
\begin{equation}\label{v0vs1}
\left\{ \begin{aligned}
&\d_t V_0 +\frac{i}{\e}\mathcal{A}_0 V_0-\frac{\o \d_\th}{\e}V_0=2\Pi_0B(V_0,V_s),\\
&\d_t V_s-\frac{\o \d_\th}{\e}V_s=\Pi_sB(V_0,V_0).\\
                          \end{aligned} \right.
\end{equation}

\subsection{Strong transparency and normal form reduction}
The following proposition states that the system \eqref{v0vs1}
satisfies a strong transparency hypothesis.

\begin{prop}\label{trans}
There exists a constant $C>0$ such that for all $\xi,\eta \in \R$,
for any eigenvalues $\l_j(\xi),~\l_{j'}(\eta),1\leq j,j'\leq 6$, and
for all $u,~v\in \C^9$, one has
\begin{equation}\label{sttrans}
|\Pi_sB(\Pi_{j}(\xi)u,\Pi_{j'}(\eta)v)|\leq
C|\l_j(\xi)+\l_{j'}(\eta)||u||v|.
\end{equation}
\end{prop}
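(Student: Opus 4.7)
The plan is to exploit the Landau--Lifshitz (third) row of the eigenvalue equation $(A(e_1)\xi + L_0/i)w = \lambda w$ to express the $H$--components of any eigenvector in terms of its $M$--components, then to substitute these relations into $\Pi_s B(\cdot,\cdot)$ and harvest a factor of $\lambda + \mu$ from an algebraic cancellation. The Maxwell rows (rows $1,2$) will not enter.

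First I would unpack $\Pi_s B(u,v)$. Writing $u = (E^u, H^u, M^u)$ and $v = (E^v, H^v, M^v)$, the $E$--block of $B$ is zero, while the $H$-- and $M$--blocks are $\pm \tfrac12(M^u\times H^v + M^v\times H^u)$ (with $\alpha=1$). Since $\Pi_s = \mathrm{diag}\{1,0,0,1,0,0,1,0,0\}$ keeps only the first component of each block, the estimate reduces to controlling the single scalar
\begin{equation*}
S(u,v) := M^u_2 H^v_3 - M^u_3 H^v_2 + M^v_2 H^u_3 - M^v_3 H^u_2.
\end{equation*}

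Second, I would derive the key structural relations. With $M_0 = e_1$, the third block of the eigenvalue equation reads $\tfrac{1}{i} e_1 \times (H - M) = \lambda M$. Looking only at the second and third components (where $e_1\times$ acts nontrivially) yields the universal identities
\begin{equation*}
H_2 = M_2 + i\lambda M_3, \qquad H_3 = M_3 - i\lambda M_2,
\end{equation*}
valid for every eigenvector with \emph{any} eigenvalue $\lambda$ (not only nonzero ones). Applied to $w^u = \Pi_j(\xi)u$ with $\lambda = \lambda_j(\xi)$ and $w^v = \Pi_{j'}(\eta)v$ with $\mu = \lambda_{j'}(\eta)$, these substitutions in $S(w^u, w^v)$ give
\begin{equation*}
S(w^u, w^v) = \bigl[M^u_2 M^v_3 - M^u_3 M^v_2 + M^v_2 M^u_3 - M^v_3 M^u_2\bigr] - i(\lambda+\mu)(M^u_2 M^v_2 + M^u_3 M^v_3).
\end{equation*}
The bracket vanishes by antisymmetry, leaving exactly $-i(\lambda_j(\xi) + \lambda_{j'}(\eta))(M^u_2 M^v_2 + M^u_3 M^v_3)$.

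Third, I would conclude by the trivial orthogonal--projector bounds $|M^u| \leq |\Pi_j(\xi)u| \leq |u|$ and $|M^v| \leq |\Pi_{j'}(\eta)v| \leq |v|$, which combined with Cauchy--Schwarz on $(M^u_2 M^v_2 + M^u_3 M^v_3)$ yield \eqref{sttrans} with absolute constant $C$ (in fact $C = \tfrac12$ after accounting for the $\tfrac12$ in the definition of $B$).

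The "main obstacle" here is really just bookkeeping rather than analysis: one must notice that the transparency comes entirely from the magnetization equation, independently of the Maxwell block, and that the relations $H_{2,3} = M_{2,3} \mp i\lambda M_{3,2}$ are valid at every eigenvalue, including the degenerate points where $\lambda_2$ or $\lambda_3$ touch zero (e.g.\ $\xi = 0$). This uniformity in $\xi,\eta$ is what makes the constant $C$ in \eqref{sttrans} truly uniform over the frequency domain.
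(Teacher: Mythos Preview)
Your argument is correct, and it takes a genuinely different route from the paper's proof. The paper computes the eigenvectors $Q_j(\xi)$ explicitly via \eqref{Qj lj}, evaluates $\Pi_s B(Q_j(\xi),Q_{j'}(\eta))$ directly, and then invokes the dispersion relation \eqref{dis rel} to rewrite $\gamma_j(\xi)-\gamma_{j'}(\eta)$ and extract the factor $\lambda_j(\xi)+\lambda_{j'}(\eta)$ (this is \eqref{gj-gj'}--\eqref{psqj2}). You bypass all of this by reading off, from the third block row of the eigenvalue equation alone, the relations $H_2 = M_2 + i\lambda M_3$ and $H_3 = M_3 - i\lambda M_2$, and then observing that the antisymmetric part of $S$ cancels identically. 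Your approach is shorter, never uses the explicit eigenvectors or the dispersion relation, and makes transparent that the strong transparency of \eqref{sttrans} is a consequence of the Landau--Lifshitz block only, independent of the Maxwell block. It also handles the degenerate frequencies (where $\lambda_2$ or $\lambda_3$ vanish) uniformly, whereas the paper's formulas \eqref{Qj lj} require a continuity argument there. The paper's computation, on the other hand, yields the exact expression \eqref{psqj2}, which is useful later when the normal-form operator $J$ is written down explicitly in \eqref{Jpq}. One cosmetic point: your claimed constant $C=\tfrac12$ should be $C=\tfrac{1}{\sqrt{2}}$, since $\Pi_s B$ has two nonzero components $\pm\tfrac12 S$.
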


\begin{proof}

By direct calculation, for any $1\leq j\leq 6$, the eigenprojector
$\Pi_j$ has the formula
\begin{equation}\label{pij}
\Pi_j(\xi) =\frac{1}{|Q_j|^2}(~\cdot ~|Q_j(\xi))Q_j(\xi),
\end{equation}
where
\begin{equation}\label{Qj lj}
Q_j=\begin{pmatrix}\frac{-i\de_j\xi}{\l_j} \O_j\\\O_j\\-\g_j
\O_j\end{pmatrix},~~\O_j=\begin{pmatrix}0\\i\de_j\\1\end{pmatrix},~~\g_j=1-\frac{\xi^2}{\l_j^2},~\de_j=(-1)^j,
\end{equation}
and the couple $(\l_j,\xi)$ satisfies the dispersion
relation~(\ref{dis rel}). We then compute
\begin{equation}\label{psqj}
\Pi_sB(Q_j(\xi),Q_{j'}(\eta))=\frac{i}{2}
(\g_{j}(\xi)-\g_{j'}(\eta)(\de_{j}-\de_{j'})
\begin{pmatrix}0\\-e_1\\e_1\end{pmatrix}.
\end{equation}
By \eqref{Qj lj}, we have
\begin{eqnarray}\label{gj-gj'}
\g_{j}(\xi)-\g_{j'}(\eta)&=&(1-\frac{\xi^2}{\l_j(\xi)})-(1-\frac{\eta^2}{\l_{j'}(\eta)})\nonumber\\
&=&(1-\frac{\l_j(\xi)+2\de_j}{\l_j(\xi)+\de_j})-(1-\frac{\l_{j'}(\eta)+2\de_{j'}}{\l_{j'}(\eta)+\de_{j'}})\nonumber\\
&=&\frac{-\de_{j}(\l_{j'}(\eta)-\de_{j}\de_{j'}\l_j(\xi))}{(\l_j(\xi)+\de_j)(\l_{j'}(\eta)+\de_{j'})}.
\end{eqnarray}

Recall that $\de_{j}$ and $\de_{j'}$ can only take the value $1$ or
$-1$. If $\de_{j'}$ and $\de_j$ have the same sign,  by
\eqref{psqj}, we have $\Pi_sB(Q_j(\xi),Q_{j'}(\eta))=0$; if
$\de_{j'}$ and $\de_j$ have the opposite sign, by (\ref{gj-gj'}), we
have
\begin{equation}\label{psqj2}
\Pi_sB(Q_j(\xi),Q_{j'}(\eta))= \frac{i
(\l_{j'}+\l_j)}{(\l_j+\de_j)(\l_{j'}+\de_{j'})}
\begin{pmatrix}0\\-e_1\\e_1\end{pmatrix}.
\end{equation}
\ By \eqref{pij} and direct calculation, the equation
\eqref{sttrans} follows.
\end{proof}

With \eqref{v0vs1} and Proposition \ref{trans}, we see that system
\ref{v0vs1} has the Maxwell-Bloch structure (terminology introduced
in Section 1.1.2). This allows to use a nonlinear change of variable
introduced by Joly, M\'etivier and Rauch \cite{JMR_TMB}. Together
with the preparation condition $\Pi_s V(0)=O(\e)$, this gives
existence in time $O(1/\e)$ for the solution to the Cauchy problem
\eqref{v0vs1}. First, changing variable $V_s=\e W_s$ gives
\begin{equation}\label{v0vs2}
\left\{ \begin{aligned}
&\d_t V_0 +\frac{i}{\e}\mathcal{A}_0 V_0-\frac{\o \d_\th}{\e}V_0=2\e \Pi_0B(V_0,W_s),\\
&\d_t W_s-\frac{\o \d_\th}{\e}W_s=\frac{1}{\e}\Pi_sB(V_0,V_0).\\
                          \end{aligned} \right.
\end{equation}

Then we introduce the nonlinear change of variable
\begin{equation}\label{N}N=W_s- J(V_0,V_0),\end{equation}
where the symmetric bilinear form $J$ has the following form
\begin{equation}\label{J}
J(\sum_{p\in \Z} u_p e^{ip\th},\sum_{q\in \Z} v_q
e^{iq\th})=\sum_{p,q}J_{pq}(u_p,v_q)e^{i(p+q)\th},
\end{equation}
for some $J_{pq}$ to be determined below. The equation in $N$ is
\begin{eqnarray}
&&\d_t N-\frac{\o\d_\th}{\e}N=\d_t W_s-\frac{\o\d_\th}{\e}W_s-J(\d_t
V_0-\frac{\o\d_\th}{\e}V_0,V_0)-J\big(V_0,\d_t
V_0-\frac{\o\d_\th}{\e}V_0\big)\nonumber\\
&&~~~=\frac{1}{\e}\Pi_sB(V_0,V_0)-J\big(-\frac{i}{\e}\sum_{j=1}^6
\l_j(\e\D_y+k\D_\th)\Pi_j(\e\D_y+k\D_\th)
V_0,V_0\big)\nonumber\\
&&~~~~~~-J\big(V_0,-\frac{i}{\e}\sum_{j=1}^6
\l_j(\e\D_y+k\D_\th)\Pi_j(\e\D_y+k\D_\th) V_0\big)\ -2J\big(2\e
\Pi_0B(V_0,W_s),V_0\big)\nonumber.\nonumber
\end{eqnarray}

We choose the bilinear operator $J$ to eliminate the singular term
of order $O(1/\e)$ in the above equation. We consider the following
equation
\begin{eqnarray}
&&\Pi_sB(V_0,V_0)=-i J\big(\sum_{j=1}^6
\l_j(\e\D_y+k\D_\th)\Pi_j(\e\D_y+k\D_\th) V_0,V_0\big)\nonumber\\
&&~~~~~~~~~~~~~~~~~~~-i J\big(V_0,\sum_{j=1}^6
\l_j(\e\D_y+k\D_\th)\Pi_j(\e\D_y+k\D_\th) V_0\big)\nonumber.
\end{eqnarray}
Equivalently, in Fourier, for all ($\xi,\eta,p,q,j,j'$):
\begin{eqnarray}
&&\Pi_sB\big(\Pi_j(\e\eta+k p)V_{0p},\Pi_{j'}(\e(\xi-\eta)+k
q)V_{0q}\big)\nonumber\\
&&~~~~~~~~~~~~~~~~=-i
\l_j(\e\eta+k p)J_{pq}\big(\Pi_j(\e\eta+k p)V_{0p},\Pi_{j'}(\e(\xi-\eta)+k q)V_{0q}\big)\nonumber\\
&&~~~~~~~~~~~~~~~~~~~-i \l_j\big(\e(\xi-\eta)+k
q\big)J_{pq}\big(\Pi_j(\e\eta+k p)V_{0p},\Pi_{j'}\big(\e(\xi-\eta)+k
q\big)V_{0q}\big)\nonumber.
\end{eqnarray}
A solution to the above equation is given by
\begin{equation}\label{Jpq}
J_{pq}\big(\Pi_j(\xi) a,\Pi_{j'}(\eta)b\big):=i \sum_{j=1}^6
\sum_{j'=1}^6\frac{ \Pi_s B(\Pi_j( \xi) a, \Pi_{j'}(\eta)
b)}{\l_j(\xi)+\l_{j'}(\eta)},~\mbox{for all $a,b\in\C^9$}.
\end{equation}
 By Proposition \ref{trans}, $J$ is well defined and is
bounded from $H^{s_1}(\T_\th,H^{s_2}_y)\times
H^{s_1}(\T_\th,H^{s_2}_y)$ to $H^{s_1}(\T_\th,H^{s_2}_y)$ provided
$s_1>1/2,s_2>1/2$. The system in $(V_0,N)$ is now
\begin{equation}\label{mb3}
\left\{ \begin{aligned}
         &\d_t V_0 +\frac{i}{\e}\mathcal{A}_0 V_0-\frac{\o \d_\th}{\e}V_0=2\e \Pi_0B(V_0,N+J(V_0,V_0)), \\
         &\d_t N-\frac{\o\d_\th}{\e}N=-2\e J(\Pi_0B(V_0, N+J(V_0,V_0)), V_0).\\
                          \end{aligned} \right.
\end{equation}
We then rescale the time and define ${\displaystyle
(\mathcal{V}_0,\mathcal{N})(\t,y,\th)= (V_0,N)(\t/\e,y,\th)}$. The
system in $(\mathcal{V}_0,\mathcal{N})$ is
\begin{equation}\label{mb4}
\left\{ \begin{aligned}
         &\d_\t \mathcal{V}_0 +\frac{i}{\e^2}\mathcal{A}_0 \mathcal{V}_0-\frac{\o \d_\th}{\e^2}\mathcal{V}_0=
         2 \Pi_0B(\mathcal{V}_0,\mathcal{N}+J(\mathcal{V}_0,\mathcal{V}_0)), \\
         &\d_\t \mathcal{N}-\frac{\o\d_\th}{\e^2}\mathcal{N}=
         -2 J(\Pi_0B(\mathcal{V}_0, \mathcal{N}+J(\mathcal{V}_0,\mathcal{V}_0)), \mathcal{V}_0).\\
                          \end{aligned} \right.
\end{equation}
The initial datum is
\begin{equation}\label{ini ch1}\left\{
\begin{aligned} &\mathcal{V}_0(0,y,\th)=\Pi_0V(0,y,\th)=\Pi_0(e^{i\th}a(y)+e^{-i\th}\overline{a(y)}+\e
a_1(y,\th)+\e^2a_2(y,\th)),\\
&\mathcal{N}(0,y,\th)=W_s(0,y,\th)-
J(\mathcal{V}_0,\mathcal{V}_0)(0,y,\th)=\frac{1}{\e}\Pi_s
V(0,y,\th)-J(\mathcal{V}_0,\mathcal{V}_0)(0,y,\th).
  \end{aligned} \right.
\end{equation}

 Since the leading term $a(y)$ belongs to
$\ker L(i(\o,k))$, and $\o\neq 0$, we have $\Pi_s a=0$. Then the
initial datum for $\mathcal{N}$ is $O(1)$ in $H^1(\T_\th,
H^{s-2}_y)$:
$$
\mathcal{N}(0,y,\th)=\Pi_s
a_1(y,\th)+\e\Pi_s
a_2(y,\th)-J(\mathcal{V}_0,\mathcal{V}_0)(0,y,\th)\in H^1(\T_\th,
H^{s-2}_y).
$$

This gives well-posedness over diffractive times:
\begin{prop}\label{wp ch1}
The Cauchy problem {\rm(\ref{mb4})-(\ref{ini ch1})} admits a unique
solution on a time interval $[0,T_2^*[$, with $T_2^*>0$ independent
of $\e$. Moreover, for all $T<T_2^*$, we have the estimates :
\begin{equation}
\sup_{[0,T]}\|(\mathcal{V}_0,\mathcal{N})(\t)\|_{H^1(\T_\th,
H^{s-2}_y)}\leq C,\nonumber
\end{equation}
where the constant $C=C(s,T)$ is independent of $\e$ and depends on
$s$ through the sum of norms
$\|a\|_{H^s_y}+\|a_1\|_{H^1(\T_\th,H^{s-1}_y)}+\|a_2\|_{H^1(\T_\th,H^{s-2}_y)}.$
\end{prop}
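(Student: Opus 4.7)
The plan is to treat \eqref{mb4} as a semilinear symmetric hyperbolic system that is uniformly well-posed in $\e$, all the work having been done by the normal form reduction in removing the singular $O(1/\e)$ source $\frac{1}{\e}\Pi_s B(V_0,V_0)$ of \eqref{v0vs1}. First I would observe that the two $O(1/\e^2)$ operators, $i\mathcal{A}_0 - \omega\partial_\theta$ and $-\omega\partial_\theta$, are skew-adjoint Fourier multipliers on $L^2(\R_y\times\T_\theta;\C^9)$: $\mathcal{A}_0$ is self-adjoint because the $\lambda_j$ are real and the $\Pi_j$ orthogonal, while $\partial_\theta$ is anti-self-adjoint. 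Therefore the $\e^{-2}$ terms vanish in any $L^2$ energy identity obtained by pairing with $(\mathcal{V}_0,\mathcal{N})$; since they are constant-coefficient Fourier multipliers, they also commute with the differentiations $\partial_y^\alpha\partial_\theta$ used to estimate higher Sobolev norms, so no commutator contribution arises.

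Next, I would verify that every nonlinear term on the right-hand side of \eqref{mb4} is a bounded multilinear map on the work space $X := H^1(\T_\theta, H^{s-2}(\R_y))$. Since $s > 5/2$, Sobolev embedding gives $H^1(\T) \hookrightarrow L^\infty$ and $H^{s-2}(\R) \hookrightarrow L^\infty$, so $X$ is a Banach algebra. The form $B$ is a constant bilinear form on $\R^9$, hence trivially bounded. The crucial input is the boundedness of $J$ on $X \times X \to X$: this is precisely where Proposition \ref{trans} enters, since the factor $(\lambda_j(\xi)+\lambda_{j'}(\eta))^{-1}$ appearing in \eqref{Jpq} is compensated by the strong-transparency vanishing of the numerator. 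Composing $B$, $J$ and $\Pi_0$ therefore yields polynomial nonlinearities of degree two, three, and five, all with operator norm independent of $\e$.

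Given these two facts, a standard argument --- apply $\partial_y^\alpha \partial_\theta$ for $|\alpha|\le s-2$, take $L^2$ inner products, use skew-adjointness to kill the $\e^{-2}$ contributions, and control the nonlinear terms by the algebra property --- yields a closed energy inequality
\[
\frac{d}{d\tau}\bigl(\|\mathcal{V}_0\|_X^2 + \|\mathcal{N}\|_X^2\bigr) \le C \bigl(1+\|\mathcal{V}_0\|_X^2 + \|\mathcal{N}\|_X^2\bigr)^3,
\]
with $C$ independent of $\e$. A bootstrap/continuation argument then provides a solution on an interval $[0,T_2^*)$, with $T_2^*$ depending only on the $X$-norm of the initial data, and uniqueness follows from the analogous estimate on the difference of two solutions.

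The one delicate point --- and what I would expect to be the main obstacle --- is controlling $\mathcal{N}(0)$, whose definition in \eqref{ini ch1} carries a prefactor $1/\e$. This is where the polarization hypothesis is used decisively: $a(y)\in\ker L(i(\omega,k))$ together with $\omega\neq 0$ forces $\Pi_s a = 0$, because on the range of $\Pi_s$ described by \eqref{pi0s} (spanned by vectors of the form $(ue_1, ve_1, we_1)^t$) both $A(e_1)$ and $L_0$ act by zero, so $L(i(\omega,k))|_{\mathrm{Im}\,\Pi_s} = -i\omega\,\mathrm{Id}$ is invertible. Consequently $\Pi_s V(0) = \e\Pi_s a_1 + \e^2\Pi_s a_2$, and the apparent $1/\e$ factor cancels to give $\mathcal{N}(0) = \Pi_s a_1 + \e\Pi_s a_2 - J(\mathcal{V}_0,\mathcal{V}_0)(0)$, uniformly bounded in $X$ by the regularity assumed on $(a,a_1,a_2)$. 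This closes the argument and yields $T_2^* > 0$ independent of $\e$.
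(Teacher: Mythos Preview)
Your proposal is correct and follows essentially the same approach as the paper's own proof, which is much terser: it simply invokes classical well-posedness for symmetric hyperbolic systems, noting that the singular operator $-\o\d_{\th}+A(e_1)k\d_{\th}+L_0$ is skew-adjoint (uniform $L^2$ estimate) and constant-coefficient (trivial commutators), and that the data are uniformly bounded. The control of $\mathcal{N}(0)$ via $\Pi_s a=0$ that you spell out is in fact established in the paragraph immediately preceding the proposition rather than inside the proof; your treatment of the boundedness of $J$ and the algebra property of $X$ makes explicit what the paper leaves implicit.
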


\begin{proof}

The system \eqref{mb4} being symmetric hyperbolic, local-in-time
well-posedness is classical. Here the bounds are uniform in $\e$.
Indeed, the initial datum \eqref{ini ch1} is uniformly bounded with
respect to $\e$ in Sobolev spaces $H^1(\T_\th, H^{s-2}_y),~
s>2+1/2$. The $L^2$ estimate is uniform in $\e$ in spite of the
large $1/\e$ prefactor because the operator
$-\o\d_{\th}+A(e_1)k\d_{\th}+ L_0$ is skew-adjoint, and the
commutator estimates are trivial because the operator
$-\o\d_{\th}+A(e_1)k\d_{\th}+ L_0$ has constant coefficients.
\end{proof}

Back to the variable $(V_0,N)$, where $V_0$ and $N$ are introduced
in \eqref{v0 vs} and \eqref{N} respectively,  by the definition of
$(\mathcal{V}_0,\mathcal{N})$, we have that $(V_0,N)$ is well
defined over the long time interval $[0,T_2^*/\e[$. Back to the
variable $V$ introduced in \eqref{profile}, this gives
well-posedness over the time interval $[0,T_2^*/\e[$ with the
estimates for any $T< T_2^*$:
\begin{equation}\label{est V}
\sup_{[0,T/\e]}\|\Pi_0 V(t)\|_{H^1(\T_\th, H^{s-2}_y)}\leq
C,~~\sup_{[0,T/\e]}\|\Pi_s V(t)\|_{H^1(\T_\th, H^{s-2}_y)}\leq C \e,
\end{equation}
for the same constant $C=C(s,T)$ as in Proposition \ref{wp ch1}.

 We proved the long time existence of order $O(1/\e)$ of the
solution of the Cauchy problem \eqref{1d sys pro}. Back to the
original variable ${\displaystyle v(t,y)=V(t,y,\th)|_{\th=\frac{-\o
t+k y}{\e}}}$, the following corollary follows immediately.
\begin{corol}\label{cor exis}
Over the time interval $[0,T_2^*/\e[$, the Cauchy problem
{\rm\eqref{sys org}-\eqref{ini org}} admits a unique solution $v$ of
the form ${\displaystyle v(t,x)=V(t,x,\frac{ky-\o t}{\e})}$, with
$V(t,x,\th)\in L^\infty ([0,T/\e]_{t},
H^1(\T_{\th},H^{s-2}(\R^1_y)))$ for any $T<T_2^*$. Moreover, by
Sobolev embedding, we have $|v|_{L^\infty} \leq C$, where $C=C(s,T)$
is given in Proposition {\rm \ref{wp ch1}}.
\end{corol}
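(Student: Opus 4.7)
The plan is to package together the three transformations carried out in Section 3 and invert them, then translate the resulting profile back to a function of $(t,y)$ via the standard profile ansatz. Nothing in this corollary requires new analytic input beyond Proposition \ref{wp ch1} and elementary Sobolev embedding; the work is entirely in checking that each transformation is reversible and preserves the regularity and uniqueness claims.

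First, I would start from the unique solution $(\mathcal{V}_0,\mathcal{N})$ provided by Proposition \ref{wp ch1} on the interval $[0,T_2^*[$, and undo the time dilation $\t=\e t$ to obtain $(V_0,N)(t,y,\th)=(\mathcal{V}_0,\mathcal{N})(\e t,y,\th)$ defined on $[0,T_2^*/\e[$. Since $\d_t = \e\d_\t$, the system \eqref{mb4} transforms back into \eqref{mb3}, and the $H^1(\T_\th,H^{s-2}_y)$ bound from Proposition \ref{wp ch1} is preserved uniformly in $\e$. Next I would invert the normal form \eqref{N} by setting $W_s:=N+J(V_0,V_0)$; the bound on $J$ stated just after \eqref{Jpq} (which uses $s-2>1/2$, consistent with our assumption $s>2+1/2$) guarantees that $W_s$ lies in the same Sobolev space. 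Finally setting $V_s:=\e W_s$ and $V:=V_0+V_s$ recovers a solution of \eqref{v0vs1}, and hence of the profile equation \eqref{1d sys pro}, and immediately yields the two estimates \eqref{est V}.

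Next I would define $v(t,y):=V(t,y,(ky-\o t)/\e)$ and verify by the chain rule that $v$ satisfies \eqref{sys org}-\eqref{ini org}: the substitution sends $\d_t\mapsto \d_t-(\o/\e)\d_\th$ and $\d_y\mapsto \d_y+(k/\e)\d_\th$, and these are precisely the combinations appearing in \eqref{1d sys pro}, so the profile equation restricts to \eqref{sys org}. The initial condition matches by construction of the initial data in \eqref{ini ch1}. For uniqueness, I would argue in reverse: any solution $\tilde v$ of \eqref{sys org}-\eqref{ini org} of the form $\tilde v(t,y)=\tilde V(t,y,(ky-\o t)/\e)$ gives a profile $\tilde V$ satisfying \eqref{1d sys pro}, from which one reconstructs $(\tilde V_0,\tilde V_s)$ via $\Pi_0,\Pi_s$, then $\tilde W_s=\tilde V_s/\e$, then $\tilde N=\tilde W_s-J(\tilde V_0,\tilde V_0)$, and finally $(\tilde{\mathcal V}_0,\tilde{\mathcal N})$ via the time dilation. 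By construction this pair satisfies \eqref{mb4} with the initial data \eqref{ini ch1}, so Proposition \ref{wp ch1} forces $(\tilde{\mathcal V}_0,\tilde{\mathcal N})=(\mathcal V_0,\mathcal N)$, and unwinding the transformations yields $\tilde v=v$.

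For the $L^\infty$ bound, I would use that $H^1(\T_\th,H^{s-2}(\R_y))$ embeds continuously into $L^\infty(\T_\th\times\R_y)$ whenever $s-2>1/2$, which holds by assumption. Combined with the estimates \eqref{est V}, this gives $|v(t,y)|\le |V(t,y,\cdot)|_{L^\infty_\th}\le C$ uniformly on $[0,T/\e]\times\R$ for any $T<T_2^*$, with the same constant $C=C(s,T)$ as in Proposition \ref{wp ch1}. The only point requiring mild care is the uniqueness argument, since one must make sure that \emph{any} solution of the original Cauchy problem can be lifted to a profile; this is not an obstacle here because the lift is simply $\tilde V(t,y,\th):=\tilde v(t,y)$ extended trivially in $\th$ only at the level of the initial datum, after which the profile equation propagates a genuinely $\th$-dependent solution that one then identifies with $V$ by the uniqueness of \eqref{mb4}.
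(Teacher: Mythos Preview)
Your proposal is correct and matches the paper's approach exactly: the paper states the corollary as following ``immediately'' from Proposition~\ref{wp ch1} by undoing the time rescaling, the normal form \eqref{N}, and the decomposition $V=V_0+V_s$ to recover $V$ with the bounds \eqref{est V}, then restricting to $\th=(ky-\o t)/\e$ and applying the Sobolev embedding $H^1(\T_\th,H^{s-2}_y)\hookrightarrow L^\infty$. The only wobble is your uniqueness discussion: the assertion that an arbitrary representation $\tilde v(t,y)=\tilde V(t,y,(ky-\o t)/\e)$ forces $\tilde V$ to solve \eqref{1d sys pro} is not justified (many profiles restrict to the same $\tilde v$), and the ``trivial lift'' remark in your last paragraph does not repair this. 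This is harmless, however, because uniqueness of $v$ for fixed $\e$ follows directly from classical well-posedness of the symmetric hyperbolic system \eqref{sys org}, independently of the profile machinery.
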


In the next section, we consider the stability of the WKB solution
from Section 2 and show two convergence results.

\section{Error estimates}

For the WKB solution $V^a$ from Section 2, we use the projection,
rescalling and normal form reduction from Section 3:
\begin{equation}
(V^a_0,V^a_s)=(\Pi_0V^a,\Pi_sV^a),\quad W^a_s=\frac{V^a_s}{\e},\quad
N^a=W^a_s-J(V^a_0,V^a_0),\quad (\mathcal{V}^a_0,\mathcal{N}^a)(\t)=
(V^a_0,N^a)(\t/\e).\nonumber
\end{equation}
where $J$ is defined by \eqref{J} and \eqref{Jpq}. Then the system
in $(\mathcal{V}^a_0,\mathcal{N}^a)$ is
\begin{equation}\label{eq app ch1}
\left\{ \begin{aligned}
         &\d_\t \mathcal{V}^a_0 +\frac{i}{\e^2}\mathcal{A}_0 \mathcal{V}^a_0-\frac{\o \d_\th}{\e^2}V^a_0
         =2 \Pi_0B(\mathcal{V}^a_0,\mathcal{N}^a+J(\mathcal{V}^a_0,\mathcal{V}^a_0))+\e\Pi_0R, \\
         &\d_\t \mathcal{N}^a-\frac{\o\d_\th}{\e^2}\mathcal{N}^a=-2 J(\Pi_0B(\mathcal{V}^a_0,
         \mathcal{N}^a+J(\mathcal{V}^a_0,\mathcal{V}^a_0)), \mathcal{V}^a_0)-2\e J(\mathcal{V}_0^a,\Pi_0 R)+ \Pi_s R\\
                          \end{aligned} \right.
\end{equation}
As shown in Section 2.2, an existence time for \eqref{eq app ch1} is
$T^*_1$ (introduced in Section 2 as an existence time for
\eqref{schr1}). Define the perturbations
\begin{equation}\label{def error}
\Phi(\t,y,\th)=(\mathcal{V}_0-\mathcal{V}_0^a)(\t
,y,\th),~~\Psi(\t,y,\th)=(\mathcal{N}-\mathcal{N}^a)(\t,y,\th),
\end{equation}
then the couple ($\Phi,\Psi$) solves the following system over time
interval $[0,T_{12}^*[~$ with $T_{12}^*=\min\{T_1^*,T_2^*\}$
($T_2^*$ is introduced in Proposition \ref{wp ch1} as an existence
time of $(\mathcal{V}_0,\mathcal{N}) $ for \eqref{mb4}-\eqref{ini
ch1}):
\begin{equation}\label{eq error}
\left\{
\begin{aligned}
         &\d_\t \Phi +\frac{i}{\e^2}\mathcal{A}_0\Phi-\frac{\o
         \d_\th}{\e^2}\Phi
         =2\Pi_0B(\mathcal{V}^a_0,\Psi)+H_0(\mathcal{V}^a_0,\mathcal{N}^a,\Phi,\Psi)-\e\Pi_0R, \\
         &\d_\t \Psi-\frac{\o\d_\th}{\e^2}\Psi=H_s (\mathcal{V}^a_0,\mathcal{N},\Phi,\Psi)+2\e J(\mathcal{V}_0^a,\Pi_0 R)-\Pi_s R,\\
                          \end{aligned} \right.
\end{equation}
where $H_0$ and $H_s$ are defined by
\begin{equation}\label{h0hs}
\left\{
\begin{aligned}
         &H_0(\mathcal{V}^a_0,\mathcal{N}^a,\Phi,\Psi):=2\Pi_0B(\Phi,\mathcal{N}^a+\Psi+J(\mathcal{V}_0^a+\Phi,\mathcal{V}_0^a+\Phi))+
          2\Pi_0B(\mathcal{V}^a_0,J(\Phi,\Phi+2\mathcal{V}^a_0)),\\
         & \begin{split}H_s(\mathcal{V}^a_0,\mathcal{N}^a,\Phi,\Psi):=
         &2 J(\Pi_0B(\mathcal{V}^a_0,\mathcal{N}^a+J(\mathcal{V}^a_0,\mathcal{V}^a_0)), \mathcal{V}^a_0) \\
          & \quad -2J(\Pi_0B(\mathcal{V}_0^a+\Phi,\mathcal{N}^a+\Psi
         +J(\mathcal{V}_0^a+\Phi,\mathcal{V}_0^a+\Phi)),\mathcal{V}^a_0+\Phi).\end{split}\\
                          \end{aligned} \right.
                          \end{equation}

By \eqref{eq va}, the initial datum for ($\Phi,\Psi$) is
\begin{equation}\label{ini error}
\left\{
\begin{aligned}
         &\Phi(0)=(V_0-V_0^a)(0)=-\e \Pi_0 b-\e^2 \Pi_0b_1, \\
         &\Psi(0)=\frac{1}{\e}(V_s-V_s^a)(0)-J(V_0,V_0)(0)+J(V^a,V^a)(0)=-\Pi_s b+\e b_2,\\
                          \end{aligned} \right.
\end{equation}
where $b$ and $b_1$ are defined by \eqref{exp b} and \eqref{exp b1},
and $b_2 \in H^1(\T_\th, H^{s-2}_y)$ is defined by
$$
b_2=-\Pi_s b_1+J(b+\e b_1,V_a(0))+J(V(0),b+\e b_1).
$$

We give two estimates for ($\Phi,\Psi$) in the following two
sections.

\subsection{First error estimate}
Here we assume only the polarization condition $a(y)\in \ker
L(i(\o,k))$. Then by \eqref{ini error}, we have
$\Phi(0)=O(\e),~\Psi(0)=O(1)$ in $ H^1(\T_\th, H^{s-2}_y)$. Then for
the symmetric hyperbolic system \eqref{eq error}, we have the
following proposition:

\begin{prop}\label{1st error est}
The Cauchy problem \eqref{eq error}-\eqref{ini error} admits a
unique solution on time interval $[0,T_3^*[$ with $T_3^*\geq
T_{12}^*$, and the following estimates hold for any $T<T_3^*$:
\begin{equation}\label{1st error}
         \|\Phi\|_{L^\infty([0,T],H^1(\T_\th,H^{s-2}_y))}\leq
         C(\e+T),~~
         \|\Psi\|_{L^\infty([0,T],H^1(\T_\th,H^{s-2}_y))}\leq
         C,\\
\end{equation}
where the constant $C=C(s,T)$ is independent of $\e$ and depends on
$s$ through the sum of norms
$\|a\|_{H^s_y}+\|a_1\|_{H^1(\T_\th,H^{s-1}_y)}+\|a_2\|_{H^1(\T_\th,H^{s-2}_y)}.$
\end{prop}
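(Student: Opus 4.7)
The plan is to perform an $H^1(\T_\th, H^{s-2}_y)$ energy estimate on the symmetric hyperbolic system \eqref{eq error}--\eqref{ini error}. The two singular Fourier multipliers $\frac{i}{\e^2}\mathcal{A}_0 - \frac{\o\d_\th}{\e^2}$ and $-\frac{\o\d_\th}{\e^2}$ are skew-adjoint in $L^2_{y,\th}$, so they contribute nothing to $\tfrac{d}{d\t}(\|\Phi\|^2 + \|\Psi\|^2)$; moreover they have constant coefficients and therefore commute exactly with any derivative $\d_y^{\alpha}\d_\th^{\beta}$ used to build the $H^1(\T_\th, H^{s-2}_y)$ norm. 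This is the crucial point: the $\e^{-2}$ factors do not appear in the energy inequality.

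Next I would estimate the right-hand sides. The profile data $\mathcal{V}^a_0$ and $\mathcal{N}^a$ are uniformly bounded in $L^\infty([0,T_{12}^*[, H^1(\T_\th, H^{s-2}_y))$ by the construction in Section~2 (Proposition~\ref{exis app1d}). Since $s-2 > 1/2$, the space $H^1(\T_\th, H^{s-2}_y)$ is a Banach algebra, and both $B$ and $J$ map it bilinearly into itself (for $J$, by Proposition~\ref{trans} and the formula \eqref{Jpq}). The nonlinearities $H_0$ and $H_s$ defined in \eqref{h0hs} vanish at $(\Phi,\Psi)=0$ and are at least linear in $(\Phi,\Psi)$ with coefficients polynomial in $(\mathcal{V}^a_0, \mathcal{N}^a)$, so they obey bounds of the form $\|H_0\| + \|H_s\| \le C(\|\Phi\| + \|\Psi\|) + C(\|\Phi\|+\|\Psi\|)^2$. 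Using also \eqref{est r b}, the $\Phi$ equation yields
\begin{equation}
\tfrac{d}{d\t}\|\Phi\| \,\le\, C\|\Phi\| + C\|\Psi\| + C\e,\nonumber
\end{equation}
while the $\Psi$ equation, together with Lemma~\ref{pt r} which provides $\|\Pi_s R\| = O(\e)$, gives
\begin{equation}
\tfrac{d}{d\t}\|\Psi\| \,\le\, C(\|\Phi\| + \|\Psi\|) + C\e.\nonumber
\end{equation}
The identity $\Pi_s R = O(\e)$ is the decisive structural input here: without it, the source in the $\Psi$ equation would be $O(1)$ and no uniform-in-time bound for $\Psi$ would be available.

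Adding these two inequalities and invoking Gronwall, with $\|\Phi(0)\| = O(\e)$ and $\|\Psi(0)\| = O(1)$ from \eqref{ini error}, produces $\|\Phi(\t)\| + \|\Psi(\t)\| \le C e^{CT}$ on $[0,T]$, which is the second bound in \eqref{1st error}. I would then reinsert the resulting bound $\|\Psi\| \le C$ into the $\Phi$ equation, so that $\tfrac{d}{d\t}\|\Phi\| \le C\|\Phi\| + C$, and one more application of Gronwall gives $\|\Phi(\t)\| \le e^{C\t}(\|\Phi(0)\| + C\t) \le C(\e + T)$, which is the first bound in \eqref{1st error}. The existence time $T_3^*$ is shown to be at least $T_{12}^*$ by a standard continuation argument: the a priori bounds just derived control $\|(\Phi,\Psi)\|_{H^1(\T_\th,H^{s-2}_y)}$, hence by Sobolev embedding their $L^\infty$ norm, and thus prevent blow-up before $T_{12}^*$.

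The main obstacle is not the energy estimate itself, which is routine once the skew-adjoint/constant-coefficient structure is identified, but rather securing the $O(\e)$ rather than $O(1)$ estimate on the $\Psi$ source term. This depends entirely on Lemma~\ref{pt r}, which in turn depends on the specific choices \eqref{f h} of the free scalar functions $f$ and $h$ in the WKB construction: those choices were made precisely so that the leading components $B(V_{11}, \overline{V}_{11})$, $B(V_{10}, V_{10})$, $B(V^0, V^2)$, and $\Pi_s B(V_{10}, V_{11})$ of the remainder all vanish. Without this transparency at the level of the corrector, the Gronwall loop could not be closed on the diffractive-optics time scale.
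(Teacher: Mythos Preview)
Your argument is essentially the paper's: the skew-adjoint, constant-coefficient Fourier multipliers disappear from the energy inequality, a first Gronwall on the coupled system gives $\|(\Phi,\Psi)\|\le C$, and a second Gronwall on the $\Phi$-equation alone, using $\|\Phi(0)\|=O(\e)$ and the bound $\|\Psi\|\le C$ just obtained, gives $\|\Phi\|\le C(\e+T)$. The paper also obtains $T_3^*\ge T_{12}^*$ slightly differently, by noting that $(\Phi,\Psi)$ is \emph{defined} as the difference of two solutions already known to exist on $[0,T_{12}^*[$, and invoking uniqueness; your continuation argument is an acceptable alternative.

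There is, however, a genuine conceptual error in your commentary. You claim that Lemma~\ref{pt r} ($\Pi_s R=O(\e)$) is ``the decisive structural input'' and that without it ``no uniform-in-time bound for $\Psi$ would be available''. This is wrong. The system \eqref{eq error} lives in the \emph{slow} time $\t\in[0,T]$ with $T$ fixed, so even with an $O(1)$ source $-\Pi_s R$ the Gronwall inequality $\tfrac{d}{d\t}\|\Psi\|\le C(\|\Phi\|+\|\Psi\|)+C$ still yields $\|\Psi\|\le Ce^{CT}$, which is exactly the bound claimed in \eqref{1st error}. The paper's proof treats the source as $O(1)$ and makes this point explicit in Remark~4.3: the first error estimate holds for \emph{any} regular $f$ solving \eqref{f trans}, not only for the special choice \eqref{f h} that underlies Lemma~\ref{pt r}. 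That lemma is decisive only for the \emph{second} error estimate (Proposition~\ref{2nd error est}), where one rescales $\Psi_1=\Psi/\e$ and therefore needs the $\Psi$-source to be $O(\e)$. You have conflated the two propositions; your last paragraph describes the obstacle in Section~4.2, not here.
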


\begin{proof} As in the proof of Proposition \ref{wp ch1},
 the data being bounded in $\e$, we have existence, uniqueness,
 and uniform bounds in $\e$ for short times with existence time $T^*_3$ independent of $\e$.

 The source terms of \eqref{eq error} is
$O(1)$, then we have the estimate:
\begin{equation}\label{1st error1}
\|(\Phi,\Psi)\|_{L^\infty([0,T],H^1(\T_\th,H^{s-2}_y))}\leq C.
\end{equation}

 By the first equation of \eqref{ini error}, the initial datum $\Phi(0)=O(\e)$.
The classical $H^s$ estimate then gives
\begin{equation}\label{1st error2}
\|\Phi\|_{L^\infty([0,T],H^1(\T_\th,H^{s-2}_y))}\leq
C\big(|\Phi(0)|_{L^\infty([0,T],H^1(\T_\th,H^{s-2}_y))}+\int_0^T C
~dt \big)\leq C (\e+T)
\end{equation}

\medskip

Since we already know that ($\Phi,\Psi$) defined by \eqref{def
error} solves \eqref{eq error}-\eqref{ini error} over time
 interval $[0,T_{12}^*[$ where $T_{12}^*:=\min\{T_1^*,T_2^*\}$, we have $T_3^*\geq T_{12}^*$
 by uniqueness.
\end{proof}

\begin{remark}
Here we show first that the exact solution exists over long times
$O(1/\e)$ before showing that it is approximated by the  WKB
solution on the intersection of their intervals of existence.
Indeed, the Maxwell-Bloch structure allows us to find the normal
form of the nonlinear equations, in contrast to, e.g., the linear
normal form reduction of {\rm\cite{em3}}.
\end{remark}

\begin{remark}
For this first error estimate, we do not need the special choice of
$f$ given by \eqref{f h}. The estimate \eqref{1st error}, as well as
the first estimate \eqref{est1} in Theorem {\rm\ref{theorem}}, hold
for any regular function $f$ that satisfies the transport equation
\eqref{f trans}, e.g. $f=0$. (Recall, $f$ is introduced in Section
2.1 as a building block of the first corrector $V^1$ in WKB
approximation. See \eqref{exp u11}).
\end{remark}

Back to the original time and variables, we immediately obtain the
following corollary:
\begin{corol}\label{1st error cor}
For any $T<T_{12}^*$, we have the error estimate

\begin{equation}\label{ini error}
\left\{
\begin{aligned}
         \|\Pi_0(V-V^a)\|_{L^\infty([0,T/\e],H^1(\T_\th,H^{s-2}_y))}&\leq
C(\e+T),\\
         \|\Pi_s(V-V^a)\|_{L^\infty([0,T/\e],H^1(\T_\th,H^{s-2}_y))}&\leq
C \e.\\
                          \end{aligned} \right.
\end{equation}
The constant $C=C(s,T)$ is as in Proposition {\rm \ref{1st error
est}}.
\end{corol}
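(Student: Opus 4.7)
The proof is essentially a bookkeeping exercise: I need to unwind the chain of changes of variables that led from $V$ to $(\mathcal{V}_0,\mathcal{N})$ (namely, projection onto $\Pi_0,\Pi_s$, division by $\varepsilon$ on the stable component, the nonlinear Joly--M\'etivier--Rauch substitution $N = W_s - J(V_0,V_0)$, and the time rescaling $\tau = \varepsilon t$), and apply them in reverse to Proposition 4.1.

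First, I would fix any $T < T_{12}^* = \min(T_1^*, T_2^*)$, so that both the WKB approximate profile $V^a$ (built in Section 2 over $[0,T_1^*/\varepsilon[$) and the exact profile $V$ (existing over $[0,T_2^*/\varepsilon[$ by Corollary 3.5) are available on $[0,T/\varepsilon]_t$, and invoke Proposition 4.1 to control $(\Phi,\Psi) = (\mathcal{V}_0 - \mathcal{V}_0^a,\ \mathcal{N} - \mathcal{N}^a)$ on $[0,T]_\tau$. The time rescaling $(\mathcal{V}_0,\mathcal{N})(\tau) = (V_0,N)(\tau/\varepsilon)$ turns the $L^\infty_\tau([0,T])$ bounds into $L^\infty_t([0,T/\varepsilon])$ bounds on $(V_0 - V_0^a,\ N - N^a)$ with exactly the same constants.

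For the $\Pi_0$ component the identification is immediate: since $\Pi_0$ is a constant matrix, $\Pi_0(V - V^a) = V_0 - V_0^a$, and the first bound of Proposition 4.1 gives $\|\Pi_0(V - V^a)\|_{L^\infty([0,T/\varepsilon],\,H^1(\mathbb{T}_\theta, H^{s-2}_y))} \leq C(\varepsilon + T)$. For the $\Pi_s$ component, I would unwind the two substitutions $V_s = \varepsilon W_s$ and $W_s = N + J(V_0, V_0)$, writing
\begin{equation}
\Pi_s(V - V^a) = \varepsilon\bigl(W_s - W_s^a\bigr) = \varepsilon\bigl(N - N^a\bigr) + \varepsilon\bigl(J(V_0, V_0) - J(V_0^a, V_0^a)\bigr).\nonumber
\end{equation}
The first term is $O(\varepsilon)$ by the $L^\infty$ bound $\|\Psi\|_{L^\infty} \leq C$ from Proposition 4.1. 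For the second term I would use bilinearity to rewrite $J(V_0,V_0) - J(V_0^a, V_0^a) = J(V_0 - V_0^a, V_0) + J(V_0^a, V_0 - V_0^a)$, and combine the continuity of $J$ on $H^1(\mathbb{T}_\theta, H^{s-2}_y)$ (guaranteed by $s-2 > 1/2$ and the bound \eqref{sttrans} used in the construction of $J$ via \eqref{Jpq}) with the uniform bounds \eqref{est V} on $V_0$ and the analogous bounds on $V_0^a$ from Section 2.2, together with the estimate on $V_0 - V_0^a$ just obtained. This produces an $O(\varepsilon + T) = O(1)$ bound for the bracket, and multiplying by $\varepsilon$ yields $\|\Pi_s(V - V^a)\|_{L^\infty([0,T/\varepsilon],\,H^1(\mathbb{T}_\theta, H^{s-2}_y))} \leq C\varepsilon$.

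There is no real obstacle here: the whole content of the corollary is already packaged in Proposition 4.1, and the only thing to verify is that the algebraic manipulations undoing the Maxwell-Bloch normal form cost nothing more than a continuity constant for $J$. The mildly delicate point, if any, is to observe that even though the difference $V_0 - V_0^a$ is only $O(\varepsilon + T)$ and not $O(\varepsilon)$, the factor of $\varepsilon$ in front of $W_s - W_s^a$ absorbs this, so the $\Pi_s$ estimate remains $O(\varepsilon)$ uniformly on $[0,T/\varepsilon]$.
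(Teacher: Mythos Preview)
Your proof is correct and is precisely the unwinding-of-variables argument the paper has in mind when it writes ``Back to the original time and variables, we immediately obtain the following corollary''; the paper gives no further details. Your treatment of the $\Pi_s$ component via $V_s - V_s^a = \e(N-N^a) + \e\bigl(J(V_0,V_0)-J(V_0^a,V_0^a)\bigr)$ and the continuity of $J$ is exactly the computation that makes the passage from Proposition~\ref{1st error est} to the corollary rigorous.
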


\subsection{Second error estimate}

In this section, we show the stability of the WKB solution in the
following sense: for some $T>0$ independent of $\e$, we have
$$\|\Pi_0(V-V^a)\|_{L^\infty([0,T/\e],H^1(\T_\th,H^{s-2}_y))}=O(\e),~~\|\Pi_s(V-V^a)\|_{L^\infty([0,T/\e],H^1(\T_\th,H^{s-2}_y))}=O(\e^2),$$
 provided the initial perturbation
\begin{equation}\label{ini
diff1}\|\Pi_0(V-V^a)(0)\|_{H^1(\T_\th,H^{s-2}_y)}=O(\e),~~\|\Pi_s(V-V^a)(0)\|_{H^1(\T_\th,H^{s-2}_y)}=O(\e^2).
\end{equation}
Here we assume that the initial corrector $a_1$ satisfies \eqref{exp
a1}. Then by Lemma \eqref{pt b},
$$
\|\Pi_s(V-V^a)\|_{L^\infty([0,T/\e],H^1(\T_\th,H^{s-2}_y))}=O(\e^2),
$$
and the initial datum of ($\Phi,\Psi$) (introduced in \eqref{def
error}) is
\begin{equation}\label{ini error2}
         \Phi(0)=-\e \Pi_0 b-\e^2 \Pi_0b_1=O(\e),\quad
         \Psi(0)=-\Pi_s b+\e b_2=\e b_2 =O(\e).
\end{equation}
 For the remainder $R$ (introduced in \eqref{eq va} and explicitly given in \eqref{exp r}), with the choice of $f$ in \eqref{f
h}, by Lemma \ref{pt r}, we have that $\Pi_s R=-\e R_s$ with
$R_s=2B(V^1,V^2)+\e B(V^2,V^2)\in
L^\infty([0,T/\e],H^1(\T_\th,H^{s-2}_y))$ for any $T<T^*_1$. We now
consider the rescaled variables
\begin{equation}\label{error rescale}
\Phi_1=\Phi/\e,~\Psi_1=\Psi/\e,
\end{equation}

then the equation in ($\Phi_1,\Psi_1$) is

\begin{equation}\label{eq error2}
\left\{
\begin{aligned}
         &\d_\t \Phi_1 +\frac{i}{\e^2}\mathcal{A}_0\Phi_1-\frac{\o
         \d_\th}{\e^2}\Phi_1
         =2\Pi_0B(\mathcal{V}^a_0,\Psi_1)+\frac{1}{\e}H_0(\mathcal{V}^a_0,\mathcal{N}^a,\e \Phi_1,\e\Psi_1)-\Pi_0R, \\
         &\d_\t \Psi_1-\frac{\o\d_\th}{\e^2}\Psi_1=\frac{1}{\e}H_s (\mathcal{V}^a_0,\mathcal{N},\e\Phi_1,\e\Psi_1)
         +2 J(\mathcal{V}_0^a,\Pi_0 R)-R_s,\\
                          \end{aligned} \right.
\end{equation}
with the initial datum
\begin{equation}\label{ini error3}
         \Phi_1(0)=- \Pi_0 b-\e \Pi_0b_1, \quad
         \Psi_1(0)=b_2,
\end{equation}
and where $(H_0,H_s)$ are defined in \eqref{h0hs}. Note that by
bilinearity of $B$, and pointwise bounds for the approximate
solution, we have uniform bounds $$ \frac{1}{\e} |H_0({\cal V}_0^a,
{\cal N}^a, \e \Phi, \e \Psi)| \leq C (|\Phi| + |\Psi|), \qquad
 \frac{1}{\e} |H_s({\cal V}_0^a,
{\cal N}^a, \e \Phi, \e \Psi)| \leq C (|\Phi| + |\Psi|).$$

It is now classical to deduce uniform bounds for $\Phi_1$ and
$\Psi_1$ in times $O(1)$:

\begin{prop}\label{2nd error est}
With the choice of $a_1$ in \eqref{exp a1}, the Cauchy problem
\eqref{eq error2}-\eqref{ini error3} admits a unique solution
{\rm($\Phi_1,\Psi_1$)} on $[0,T_4^*[$ with $T_4^*>0$ independent of
$\e$, and for any $T<T_4^*$:
\begin{equation}\label{2nd error}
         \|\Phi_1\|_{L^\infty([0,T],H^1(\T_\th,H^{s-2}_y))}\leq
         C,~~\|\Psi_1\|_{L^\infty([0,T],H^1(\T_\th,H^{s-2}_y))}\leq
         C,\\
\end{equation}
where the constant $C=C(s,T)$ is independent of $\e$ and depends on
$s$ through the sum of norms
$\|a\|_{H^s_y}+\|a_1\|_{H^1(\T_\th,H^{s-1}_y)}+\|a_2\|_{H^1(\T_\th,H^{s-2}_y)}.$

\end{prop}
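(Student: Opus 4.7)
The plan is to treat \eqref{eq error2} as a symmetric hyperbolic system whose singular operators are skew-adjoint and whose source and nonlinear terms are uniformly bounded in $\e$, then close a standard energy estimate at the $H^1(\T_\th,H^{s-2}_y)$ level and conclude by Gronwall. The overall structure parallels the proof of Proposition \ref{wp ch1}, but with the added care that the formally singular $1/\e$ prefactors in the nonlinear source terms $H_0$ and $H_s$ must disappear once $(\Phi,\Psi)$ is rescaled by $\e$.

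First, I would check the hypotheses of the standard local well-posedness theorem for quasilinear/semilinear symmetric hyperbolic systems. The operators $i\mathcal{A}_0/\e^2$ and $-\o\d_\th/\e^2$ are Fourier multipliers: $\mathcal{A}_0$ is real-valued (its symbol is the sum of the eigenvalues $\l_j$ against the orthogonal projectors $\Pi_j$), and $-\o\d_\th$ is skew-adjoint. Consequently the singular part contributes nothing in the $L^2$ energy identity, and since these operators commute with all derivatives in $(y,\th)$, the same is true for the full $H^1(\T_\th,H^{s-2}_y)$ norm. The initial datum \eqref{ini error3} is uniformly bounded: $\Pi_s b=0$ by Lemma \ref{pt b} under the choice \eqref{exp a1}, so $\Psi_1(0)=b_2\in H^1(\T_\th,H^{s-2}_y)$ is $O(1)$, and $\Phi_1(0)=-\Pi_0 b-\e\Pi_0 b_1$ is $O(1)$ directly. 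This yields a common existence time $T_4^*>0$ independent of $\e$.

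Next, I would derive the uniform a priori bound. The key point is that the two terms marked with a $1/\e$ in \eqref{eq error2} are in fact $O(1)$. Indeed, by bilinearity of $B$ and of $J$, and by expanding the definitions of $H_0$ and $H_s$ in \eqref{h0hs}, each monomial contains at least one factor $\Phi$ or $\Psi$; after the substitution $(\Phi,\Psi)=\e(\Phi_1,\Psi_1)$, every such monomial carries at least one $\e$, which cancels the $1/\e$ prefactor. The remaining Moser-type estimates in $H^1(\T_\th,H^{s-2}_y)$ (using $s-2>1/2$ so this space is an algebra) together with the tame bound on $J$ from Proposition \ref{trans} give
\[
\Big\|\tfrac{1}{\e}H_0(\mathcal{V}^a_0,\mathcal{N}^a,\e\Phi_1,\e\Psi_1)\Big\|+\Big\|\tfrac{1}{\e}H_s(\mathcal{V}^a_0,\mathcal{N}^a,\e\Phi_1,\e\Psi_1)\Big\|\le C\bigl(\|\Phi_1\|+\|\Psi_1\|\bigr)+\e\,C\bigl(\|\Phi_1\|^2+\|\Psi_1\|^2+\|\Phi_1\|^3\bigr),
\]
where all norms are in $H^1(\T_\th,H^{s-2}_y)$ and $C$ depends on the already-controlled quantities $\|\mathcal{V}_0^a\|$, $\|\mathcal{N}^a\|$ through Proposition \ref{wp ch1} (applied to the WKB solution). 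The remaining inhomogeneous terms $\Pi_0 R$, $R_s$, and $J(\mathcal{V}_0^a,\Pi_0 R)$ are uniformly bounded in $L^\infty_\t H^1(\T_\th,H^{s-2}_y)$: for $R_s$ this is precisely Lemma \ref{pt r}, which is where the special choice of $f$ in \eqref{f h} is used.

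Combining the energy identity for the symmetric part with these nonlinear and source estimates, one obtains
\[
\frac{d}{d\t}\bigl(\|\Phi_1\|^2+\|\Psi_1\|^2\bigr)\le C\bigl(\|\Phi_1\|^2+\|\Psi_1\|^2\bigr)+C+\e\,C\bigl(\|\Phi_1\|^3+\|\Psi_1\|^3+\cdots\bigr),
\]
and Gronwall's lemma, together with a continuity argument to absorb the $\e$-small supercritical terms, gives the uniform bound \eqref{2nd error} on any $[0,T]$ with $T<T_4^*$. The main obstacle is to make the bilinear/Moser bookkeeping sharp enough that the bound is genuinely uniform in $\e$: one must be careful that every differentiation falling on $\mathcal{A}_0 \Phi_1$ does not produce a factor $1/\e$, which is guaranteed only because $\mathcal{A}_0$ is a Fourier multiplier (so it commutes with $\d_y$ and $\d_\th$) and because $J$, thanks to Proposition \ref{trans}, maps the product of Sobolev spaces into the same Sobolev space without loss. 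Uniqueness on $[0,T_4^*)$ is immediate from the linear energy estimate for the difference of two solutions.
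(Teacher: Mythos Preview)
Your proposal is correct and follows essentially the same approach as the paper: the paper simply observes that, by bilinearity, $\tfrac{1}{\e}H_0(\mathcal{V}_0^a,\mathcal{N}^a,\e\Phi_1,\e\Psi_1)$ and $\tfrac{1}{\e}H_s(\cdot)$ are bounded linearly in $(\Phi_1,\Psi_1)$, and then invokes the classical local theory for symmetric hyperbolic systems with skew-adjoint singular parts, exactly as you do. One minor point: the uniform bounds on $(\mathcal{V}_0^a,\mathcal{N}^a)$ come directly from the WKB construction in Section~2, not from Proposition~\ref{wp ch1} (which concerns the exact solution).
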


\begin{remark}
For this second error estimate, we do need the special choice of $f$
in \eqref{f h}. Together with the choice of initial corrector $a_1$
in \eqref{exp a1}, it allows us to rescale the solution \eqref{error
rescale} to obtain the desired estimates.

\end{remark}

Back to the original time and variables, we immediately deduce the
following corollary:
\begin{corol}\label{2nd error cor}
Let $T^*=\min\{T_{12}^*,T_4^*\}$, then for any $T<T^*$, we have the
error estimates
\begin{equation}\label{1st error1}
         \|\Pi_0(V-V^a)\|_{L^\infty([0,T/\e],H^1(\T_\th,H^{s-2}_y))}\leq
         C\e,~~\|\Pi_s(V-V^a)\|_{L^\infty([0,T/\e],H^1(\T_\th,H^{s-2}_y))}\leq
         C \e^2,
\end{equation}
where the constant $C=C(s,T)$ is as in Proposition {\rm \ref{2nd
error est}}.
\end{corol}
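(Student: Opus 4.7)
The plan is to unwind the chain of substitutions that produced the rescaled variables $(\Phi_1,\Psi_1)$ and propagate the uniform $O(1)$ bound from Proposition \ref{2nd error est} back to the original perturbation $V-V^a$ projected on $\Pi_0$ and $\Pi_s$. The work is essentially bookkeeping: time rescaling, the amplitude rescaling $\Psi_1=\Psi/\e$, and the nonlinear change of unknown $N=W_s-J(V_0,V_0)$ each contribute a factor of $\e$, which together produce the gains $O(\e)$ on $\Pi_0$ and $O(\e^2)$ on $\Pi_s$.

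First I would fix $T<T^*=\min\{T_{12}^*,T_4^*\}$ so that both the exact solution (by Corollary \ref{cor exis}) and the approximate solution (by Proposition \ref{exis app1d}) exist on $[0,T/\e]$, and simultaneously the rescaled error $(\Phi_1,\Psi_1)$ exists and is bounded on $[0,T]_\t$ by Proposition \ref{2nd error est}. Undoing the amplitude rescaling \eqref{error rescale} immediately gives
\begin{equation*}
\|\Phi\|_{L^\infty([0,T],H^1(\T_\th,H^{s-2}_y))}\leq C\e,\qquad
\|\Psi\|_{L^\infty([0,T],H^1(\T_\th,H^{s-2}_y))}\leq C\e,
\end{equation*}
and undoing the time rescaling $(\mathcal{V}_0,\mathcal{N})(\t)=(V_0,N)(\t/\e)$ transfers these bounds into bounds on $V_0-V_0^a$ and $N-N^a$ over the long interval $[0,T/\e]_t$. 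Since $V_0=\Pi_0 V$ and $V_0^a=\Pi_0 V^a$, this already delivers the first half of the corollary: $\|\Pi_0(V-V^a)\|_{L^\infty([0,T/\e],\,H^1(\T_\th,H^{s-2}_y))}\leq C\e$.

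Second, to recover the $\Pi_s$ estimate I would invert the normal form. By definition $N-N^a=(W_s-W_s^a)-(J(V_0,V_0)-J(V_0^a,V_0^a))$, hence
\begin{equation*}
W_s-W_s^a=(N-N^a)+J(V_0-V_0^a,V_0+V_0^a).
\end{equation*}
The first term is $O(\e)$ by the bound just obtained on $N-N^a$. For the second, the continuity of $J$ on $H^1(\T_\th,H^{s-2}_y)$ (with $s>2+1/2$, as used at the end of the derivation of \eqref{Jpq}) combined with the uniform bounds on $V_0$ and $V_0^a$ from Proposition \ref{wp ch1} and Proposition \ref{exis app1d}, together with the $O(\e)$ estimate on $V_0-V_0^a$ from the previous paragraph, gives $\|J(V_0-V_0^a,V_0+V_0^a)\|=O(\e)$. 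Thus $W_s-W_s^a=O(\e)$, and multiplying by the factor $\e$ from $V_s=\e W_s$ yields
\begin{equation*}
\Pi_s(V-V^a)=V_s-V_s^a=\e(W_s-W_s^a)=O(\e^2)
\end{equation*}
in $L^\infty([0,T/\e],H^1(\T_\th,H^{s-2}_y))$, which is the second asserted estimate.

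There is no real obstacle left at this point: all the delicate work — the weak transparency structure \eqref{weak str}, the strong transparency bound \eqref{sttrans} enabling the normal form \eqref{Jpq}, the specific choices \eqref{f h} and \eqref{exp a1} that produce $\Pi_s R=O(\e)$ via Lemma \ref{pt r} and $\Pi_s b=0$ via Lemma \ref{pt b}, and the $H^s$ energy estimate on the rescaled system carried out in Proposition \ref{2nd error est} — has already been done. The only point requiring a small amount of care is checking that the constant $C$ in the final estimate inherits the $s$-dependence claimed, but this is inherited transparently from the $s$-dependence of the constant in Proposition \ref{2nd error est} since bilinear estimates on $B$ and $J$ cost only factors independent of $\e$.
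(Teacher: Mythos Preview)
Your argument is correct and is exactly the intended one: the paper simply states that the corollary follows ``immediately'' by returning to the original time and variables, and what you have written is precisely the unwinding of the rescalings $(\Phi,\Psi)=\e(\Phi_1,\Psi_1)$, the time change $\t=\e t$, and the normal form $N=W_s-J(V_0,V_0)$ together with $V_s=\e W_s$, using the boundedness of $J$ and of $V_0,V_0^a$. There is nothing to add.
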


This shows stability, that is, the WKB approximate profile stays
close to the exact profile over its existence time with an error
estimate that is comparable to the initial error.

\subsection{Proof of Theorem \ref{theorem}}
We now sum up and prove Theorem \ref{theorem}.

\medskip

First, by Corollary \ref{cor exis}, the function
$v(t,y)=V(t,y,\th)|_{\th=\frac{ky-\o t}{\e}}$ solves \eqref{sys
org}-\eqref{ini org} over time interval $[0,T^*[$, and this gives
the first result of Theorem \ref{theorem}.

\medskip

Second, by the Corollary \ref{1st error cor} and Sovolev embedding
$H^1(\T_\th,H^{s-2}_y)\subset L^\infty(\T_\th\times\R_y)$, the error
estimates \eqref{est1} and \eqref{est11} of Theorem \ref{theorem}
follow immediately.

\medskip

Finally, by the Corollary \ref{2nd error cor} and Sovolev embedding
$H^1(\T_\th,H^{s-2}_y)\subset L^\infty(\T_\th\times\R_y)$, we have
the second error estimate \eqref{est2}. This shows stability of the
approximate solution.

~~

\textbf{Acknowledgment}. I warmly thank Eric Dumas and Isabelle
Gallagher for stimulating discussions. I particularly thank my
advisor Benjamin Texier for his fruitful remarks and for suggesting
this interesting subject.

~

{\footnotesize  }

\end{document}